\newtheorem{theorem} {{\textsf{Theorem}}}[section]
\newtheorem{proposition}[theorem]{{\textsf{Proposition}}}
\newtheorem{definition}[theorem]{{\textsf{Definition}}}
\newtheorem{remark}[theorem]{{\textsf{Remark}}}
\newtheorem{lemma}[theorem]{{\textsf{Lemma}}}
\newtheorem{problem}[theorem]{{\textsf{Problem}}}
\newcommand{\Star}{\mbox{\upshape st}\,}
\newcommand{\lk}{\mbox{\upshape lk}\,}
\newcommand{\RR}{\mathbb{R}}
\newcommand{\ZZ}{\mathbb{Z}}
\newcommand{\Ss}{\mathcal{S}}
\title{Three-dimensional normal pseudomanifolds with relatively few edges}
\author{Biplab Basak\\
\small Department of Mathematics\\
\small Indian Institute of Technology Delhi, Hauz Khas, New Delhi 110016, India\\
\small \texttt{biplab@iitd.ac.in}\\ 
\and Ed Swartz\\
\small Department of Mathematics\\
\small Cornell University, Ithaca NY, 14853-4201, USA,\\
\small \texttt{ebs22@cornell.edu}}
\begin{document}
\maketitle

\begin{abstract}
  Let $\Delta$ be a $d$-dimensional normal pseudomanifold, $d \ge 3.$  A relative lower bound for the number of edges in $\Delta$  is that $g_2$ of $\Delta$ is at least $g_2$ of the link of any vertex. When this inequality is sharp  $\Delta$ has {\bf relatively minimal $g_2$.}  For example, whenever the one-skeleton of $\Delta$ equals the one-skeleton of the star of a vertex, then $\Delta$ has relatively minimal $g_2.$  Subdividing a facet in such an example also gives a complex with relatively minimal $g_2.$  We prove that in dimension three these are the only examples.  As an application  we  determine the combinatorial and topological type of $3$-dimensional $\Delta$ with relatively minimal $g_2$ whenever $\Delta$ has two or fewer singularities.  The topological type of any such complex is a pseudocompression body, a pseudomanifold version of a compression body.  
 
 Complete combinatorial descriptions of $\Delta$ with $g_2(\Delta) \le 2$ are due to Kalai \cite{Kalai} $(g_2=0)$, Nevo and Novinsky \cite{NevoNovinsky} $(g_2=1)$ and Zheng \cite{Zheng} $(g_2=2).$  In all three cases $\Delta$ is the boundary of a simplicial polytope.  Zheng observed that for all $d \ge 0$ there are triangulations of $S^d \ast \RR P^2$ with $g_2=3.$  She asked if this is the only nonspherical topology possible for $g_2(\Delta)=3.$  As another application of relatively minimal $g_2$ we give an affirmative answer when $\Delta$ is $3$-dimensional.

\end{abstract}

\section{Introduction}

 The fundamental enumerative invariant of a finite $d$-dimensional simplicial complex is its $f$-vector $(f_{-1},f_0, \dots, f_d),$ where   $f_i$ is the number of $i$-dimensional faces for $-1\leq i \leq d$. For all complexes the empty set is the only $(-1)$-dimensional face.  So $f_{-1}=1.$ The study of the interplay between the $f$-vector of a complex and its topology goes back to the dawn of algebraic topology and the invariance of the Euler characteristic.   Inequalities involving the $f_i$ have proven to be a very interesting and challenging approach to analyzing the relationship between  the $f$-vector of a complex and its topology.  One of the first big steps in this direction was Walkup's 1970 Acta paper, ``The lower bound theorem for $3$- and $4$-manifolds'' \cite{Walkup}.  For any $d$-dimensional simplicial complex $g_2$ is defined to be $f_1 - (d+1) f_0 + \binom{d+2}{2}.$ Walkup proved that if $\Delta$ is a  triangulation of a  closed connected $3$-manifold, then $g_2 \ge 0.$ He also proved that if $g_2=0,$ then $\Delta$ is a stacked sphere. Later, Barnette extended these results to manifolds of higher dimension \cite{Barnette1}, \cite{Barnette2}. In the 80's Kalai  established this theorem for the class of normal pseudomanifolds of dimension at least three whose two-dimensional links were spheres \cite{Kalai}.    Fogelsanger's thesis made it possible to remove the restriction on two-dimensional links  \cite{Fogelsanger}. Another proof of the nonnegativity of $g_2$ is due to Gromov \cite[pages 211--212]{Gromov} with a detailed exposition by Bagchi and Datta \cite{BagchiDatta}.
 
One way to see the importance of $g_2$ is the main result of \cite{Swartz2008}.  Up to scaling,  $g_2$ is the unique affine invariant of $f_0$ and $f_1$ with the following properties:  Fix $d \ge 2$ and $\Gamma \ge 0$  and let $\mathcal{T}(d,\Gamma)$ be the collection of all triangulations $\Delta$ of PL-manifolds whose dimension is bounded by $d$ and $g_2(\Delta) \le \Gamma.$  Then  $g_2(\partial \Delta^{d+1})=0,~ g_2(\Delta) \ge 0$ for all $\Delta \in \mathcal{T}(d,\Gamma),$ and $\mathcal{T}(d,\Gamma)$ is an infinite set but only has finitely many homeomorphism types.

  Kalai's paper included a relative lower bound based on the links of the complex.  With the addition of Fogelsanger's thesis, Kalai's relative lower bound for $g_2$ says that if $\Delta$ is a normal $d$-pseudomanifold with $d \ge 3,$ then $g_2(\Delta) \ge g_2(\lk v)$ for any vertex $v$ of $\Delta.$ This opens the possibility of establishing lower bound theorems which depend on the singularities of the pseudomanifold.  For $3$-dimensional normal pseudomanifolds this is particularly attractive as all singular links occur at vertices, are connected compact surfaces, and the set of singularities is a homeomorphism invariant.  Furthermore, for any connected compact surface $K,$  $g_2(K)$ is always $3 b_1(K),$ where $b_1(K)$ is the first Betti number of $K$ with $\ZZ/2\ZZ$ coefficients.  The question becomes, given a (finite) multiset of nonspherical connected compact surfaces $\Ss=\{K_1, \dots, K_n\}$ what is the minimum of $g_2$ over all triangulations of $3$-dimensional normal pseudomanifolds whose multiset of singular vertex links is $\Ss.$  The relative lower bound mentioned above implies that it is at least $\max_i 3  b_1(K_i).$ This lower bound is sharp for over $500$ of the $1578$ distinct multisets of singularities which occur in all triangulations of $3$-dimensional normal pseudomanifolds with $10$ or fewer vertices \cite{AkhShiSwa}.
  
One  place to start is to study $\Delta$ where $g_2(\Delta)=g_2(\lk v)$ for some vertex $v \in \Delta.$  Such complexes have {\bf relatively minimal} $g_2.$ What are the possible topological/combinatorial types of complexes with relatively minimal $g_2?$  If the $1$-skeleton of $\Delta$ is the $1$-skeleton of the star of a vertex, then $\Delta$ has relatively minimal $g_2.$  Facet subdivisions of such complexes also have relatively minimal $g_2.$  Theorem \ref{no outside vertices} says that in dimension three all complexes with relatively minimal $g_2$ have this structure.

For which multisets of singularities $\Ss$ does there exist $\Delta$ such that the multiset of singularities of $\Delta$ is $\Ss$ and $\Delta$ has relatively minimal $g_2? $  We give a complete answer to this question in dimension three when  $|\Ss|\le2.$  In this situation the topological type is always a pseudocompression body, a pseudomanifold version of compression bodies (Theorems \ref{two singularities - combinatorics} and \ref{two singularities - homeomorphism}).
  
  Since  Kalai's work there has been an effort to classify normal pseudomanifolds with small $g_2.$  Nevo and Novinsky did this for $g_2=1$ \cite{NevoNovinsky} and Zheng for $g_2=2$ \cite{Zheng}.  In both cases all the complexes are polytopal.  Zheng observed that for all $d \ge 3$ there are triangulations of $S^{d-3} \ast \RR P^2$ with $g_2=3.$  She asked if this is the only topological type possible other than spheres when $g_2=3.$  We prove that this is true for $d=3$ and give a combinatorial description of all such triangulations (Theorem \ref{g2=3 complexes}).

The next two sections  establish the notation and constructions that will be needed.  The only   idea that is not well known is the use of folding a pseudomanifold.   This involves identifying two facets which share one or two vertices.  Section \ref{rigidity} introduces Kalai's techniques and results concerning the use of rigidity in face enumeration.  This is followed by a general study of complexes with relatively minimal $g_2.$  The last two sections state and prove the main theorems discussed above.

\section{Notation} \label{notation}

All of our simplicial complexes are finite and abstract - a subset of the power set of a finite set which is closed under subsets.  
The vertex set of a simplicial complex $\Delta$ is $V(\Delta),$ or just $V$ if no confusion is possible.  Similarly, the edge set of $\Delta$ is $E(\Delta)$  or $E,$  and the {\bf graph} of $\Delta$ is  $G(\Delta)$ or  $G$ and is the one-skeleton of $\Delta.$ Faces of $\Delta$ will usually be written by  concatenating the vertices of the face.  So $xy$ is an edge and $abcd$ is a tetrahedron.  A sequence $(v_1, \dots, v_n)$ of  vertices in $\Delta$ is a {\bf path} if for all $1 \le i \le n-1, ~v_i v_{i+1}$ is an edge.    We will use $P(v_1, \dots, v_n)$ to indicate $(v_1, \dots, v_n)$ is a path. A {\bf circle} in $\Delta$ is a path $P(v_1, \dots, v_n)$ such that $v_1=v_n$ and $v_i \neq v_j$ for all other pairs $i \neq j.$   Circles will be denoted by $C(v_1, \dots, v_n).$   A {\bf graph cone point} of $\Delta$ is a vertex which has every other vertex as a neighbor.  As usual, $|\Delta|$ will represent the topological type of a geometric realization of $\Delta.$  When we say that $\Delta$ has a topological property,  we are actually saying that $|\Delta|$ has the stated property.

The {\bf link} of a face $\sigma$ of $\Delta$ is 
$$ \lk (\sigma, \Delta) = \{\tau \in \Delta: \sigma \cap \tau = \emptyset, \mbox{ and } \sigma \cup \tau \in \Delta\}.$$
If no confusion is possible, then we use $\lk \sigma.$  The {\bf star} of a face will always be the closed star and is defined by
$$\Star (\sigma, \Delta) = \{ \tau \in \Delta: \sigma \cup \tau \in \Delta\}.$$
As with the link, $\Star (\sigma, \Delta)$ might be shortened to $\Star \sigma.$  

Maximal faces of a simplicial complex are called {\bf facets.}  When all facets have the same dimension the complex is called {\bf pure.}  {\bf Pseudomanifolds} are pure simplicial complexes in which every codimension-one face is contained in exactly two facets and a $d$-pseudomanifold is a $d$-dimensional pseudomanifold.  If every codimension-one face of $\Delta$ is contained in one or two facets, and  the collection of the codimension-one faces contained in exactly one facet forms a pseudomanifold, then $\Delta$ is a {\bf pseudomanifold with boundary.}  In this case the {\bf boundary} of $\Delta$ is $\partial \Delta,$ the pseudomanifold whose facets are the codimension-one faces contained in exactly one facet of $\Delta.$  

A pseudomanifold in which the links of all faces of codimension-two or more are connected is called {\bf normal.}  Any triangulation of $S^1$ or $S^0$ is also a normal pseudomanifold.  The definitions imply that the link of a face of a normal pseudomanifold is a normal pseudomanifold.  Two-dimensional normal pseudomanifolds are triangulations of connected compact surfaces.  So the vertex links of $3$-dimensional normal pseudomanifolds are  triangulations of connected compact surfaces.  Vertices whose links are spheres are {\bf nonsingular}.  All other vertices are {\bf singular.}  For a normal three-dimensional pseudomanifold $\Delta$ its {\bf multiset of singularities} is the multiset $\Ss(\Delta)$ of nonspherical vertex links.  

The main enumerative invariant of a simplicial complex is its $f$-vector.  The number of $i$-dimensional faces of $\Delta$ is $f_i$ and for a $d$-dimensional complex the $f$-vector is $(f_{-1}, f_0, \dots, f_d).$  The empty set is the only $(-1)$-dimensional face. The $h$-vector is a linear transformation of the $f$-vector. For a $d$-dimensional complex the $h$-vector is  $(h_0, \dots, h_{d+1})$ and is defined by
$$ h_i = \sum^i_{j=0} (-1)^{i-j} \binom{d+1-j}{i-j} f_{j-1}.$$  Once $h_i$ is defined, $g_i$ is given by $h_i - h_{i-1}.$  Thus $g_2=f_1 - (d+1) f_0 + \binom{d+2}{2}.$  

One of the primary questions under consideration is the relationship between $\Ss(\Delta)$ and $g_2(\Delta).$  Let $\Ss$ be a multiset of connected compact surfaces.  We define

$$\Gamma(\Ss) = \min_{\Ss(\Delta) = \Ss} g_2(\Delta).$$

 Compression bodies (cf. \cite{BF,Schultens}) are one of the building blocks of $3$-manifolds with boundary.  A {\bf compression body} is a $3$-manifold with boundary that is built by the following procedure. Start with a  compact surface  $K$ with components $K_1, \dots, K_l.$  Then $K \times [0,1]$ is a three-manifold with boundary.  Now connect $1$-handles to $K \times \{1\}$ to get a new manifold $C.$   The resulting manifold is a {\bf compression body}.  The boundary of $C$  consists of the disjoint union of $K_1 \times \{0\}, \dots, K_l \times \{0\}$ and whatever boundary is formed by $K \times \{1\}$ and the attached $1$-handles.  The last part is called the {\bf top}  of $C.$ Compression bodies can be used as building blocks for three-manifolds with boundary. 

\begin{definition}
  A {\bf pseudocompression body} is a connected compression body with each boundary component coned off.
\end{definition}

Some examples of pseudocompression bodies are $S^3,$ handlebodies with their boundaries coned off and suspensions of surfaces.


Instead of using genus to specify the topological type of a connected compact surface $|\Delta|$ we  use  $b_1(\Delta) = \dim_{\ZZ/2\ZZ}  H_1(\Delta; \ZZ/2\ZZ)$  and its orientability if it is not clear from the parity of $b_1(\Delta).$ If $|\Delta|$ is a connected compact surface, then $f_1 =3 f_0 -3(2-b_1(\Delta)).$  Hence $g_2(\Delta) = 3~b_1(\Delta)$ and only depends on  the topology of the surface.

\section{Constructions} \label{constructions}

The one-vertex suspension of a simplicial complex $\Delta$ is a way of producing a complex homeomorphic to $\Sigma \Delta,$ the suspension of $\Delta,$ using only one extra vertex instead of the usual two extra vertices.   One-vertex suspension concept was first introduced in \cite{BagchiDatta1},  and later in \cite{BL}, the authors renamed the concept as one-point suspension. Let $v$ be a vertex of $\Delta.$  Form a new complex $\Sigma_v \Delta$, which is called the {\bf one-vertex suspension of} $\Delta$ as follows.  First remove $v$ and all of its incident faces from $\Delta.$  Now add two new vertices, $x$ and $y$ and an edge between them.  Set the link of the edge $xy$ in $\Sigma_v \Delta$ to be equal to the link of $v$ in $\Delta.$  Finally, add all faces of the form $x\sigma$ and $y\sigma$ for all faces $\sigma$ of $\Delta$ not in the star of $v.$  To see that $\Sigma_v \Delta$ is homemorphic to the usual suspension of $\Delta$ perform a stellar subdivision on the edge $xy.$  The {\bf suspension points} of $\Sigma_v \Delta$ are $x$ and $y.$  Observe that the links of $x$ and $y$ are simplicially isomorphic to $\Delta.$

Let $n$ be the number of vertices of $\Delta$ which are {\it not} neighbors of $v.$  Direct computation shows that $g_2(\Sigma_v\Delta) = g_2(\Delta) + n.$  In particular, if $v$ is a graph cone point of  $\Delta$ (i.e., $v$ has every other vertex as a neighbor in $\Delta$), then $g_2(\Delta) = g_2(\Sigma_v \Delta).$ 

Let $\sigma_1,\sigma_2$ be two faces of  $\Delta$. A bijection $\psi:\sigma_1\to \sigma_2$ is said to be {\bf admissible}  (cf. \cite{BagchiDatta}) if for all vertices $x \in \sigma_1$ the edge distance between $x$ and $\psi(x)$ in the  graph of $\Delta$ is at least three. If $\psi$ is an admissible bijection between $\sigma_1$ and $\sigma_2$ then we can form a new simplicial complex by identifying  all faces $\rho_1 \subseteq \sigma_1, \rho_2 \subseteq \sigma_2$ such that $\psi$ maps $\rho_1$ bijectively onto $\rho_2.$ When $\sigma_1$ and $\sigma_2$ are facets we define $\Delta^{\psi}$ to be the simplicial complex described above with the identified facet removed.  When $\sigma_1$ and $\sigma_2$ are also in the same connected component of $\Delta$ we say $\Delta^\psi$ is obtained from $\Delta$ by a {\bf handle addition} (cf. \cite{Walkup}).  If instead $\sigma_1$ and $\sigma_2$ are in distinct components, then we say $\Delta^\psi$ is a {\bf  connected sum} and frequently (but not always) write $\Delta^\psi = \Delta_1 ~\#_\psi ~\Delta_2,$ where $\Delta_1$ and $\Delta_2$ are two distinct components of $\Delta$ such that $\Delta=\Delta_1 \cup \Delta_2, \sigma_1 \in \Delta_1$ and $\sigma_2 \in \Delta_2.$  One of the simplest connected sum constructions  for a $d$-dimensional complex is  $\Delta \# \partial \Delta^{d+1}.$  Combinatorially, this is the same as removing a facet $\sigma$ of $\Delta$ and replacing it with a cone on the boundary of $\sigma.$ This operation is called a {\bf facet subdivision.}  As long as $d \ge 2,$ subdividing a facet does not change $g_2.$  

A straightforward computation shows that for a $d$-dimensional complex $\Delta$ handle additions satisfy,
\begin{equation} \label{g_2: handles}
g_2(\Delta^\psi) = g_2(\Delta) + \binom{d+2}{2}.
\end{equation}

  \noindent Similarly, for connected sum
\begin{equation} \label{g_2:connected sum}
g_2(\Delta_1 ~\#_\psi~ \Delta_2) = g_2(\Delta_1) + g_2(\Delta_2).
\end{equation}

Let $\bar{x}$ be the vertex  which represents the identified $x$ and $\psi(x)$ in a handle addition or connected sum. In both cases the link of $\bar{x}$ is $\lk (x, \Delta)~ \#_\psi ~ \lk(\psi(x), \Delta).$  For all vertices $y$ which are not in the domain or codomain of $\psi$ the link of $y$ is the same in $\Delta^\psi$ or $\Delta_1 \#_\psi \Delta_2$ as it was in $\Delta.$   In particular, if $\Delta$ was a  triangulation of a $d$-manifold, then the result of handle addition or connected sum is a triangulation of another $d$-manifold.  Similarly, if $\Delta$ was a normal pseudomanifold of dimension at least two, then the output of both operations  is also a normal pseudomanifold of the same dimension as $\Delta.$   

Handle addition and connected sum are a standard part of combinatorial simplicial topology.  But, the operation of {\bf folding} is not  well known.

\begin{definition}
Let $\sigma_1$ and $\sigma_2$ be two facets of a simplicial complex $\Delta$ whose intersection is a single vertex $x.$  A bijection $\psi:\sigma_1 \to \sigma_2$ is {\bf vertex folding admissible} if $\psi(x) = x$ and if for all other vertices $y$ of $\sigma_1$ the only path of length two from $y$ to $\psi(y)$ is $P(y, x, \psi(y)).$  Now, if $\psi$ is folding admissible, then we can again form the complex $\Delta^\psi_v$ obtained by identifying all faces $\rho_1 \subseteq \sigma_1 $ and $\rho_2 \subseteq \sigma_2 $ such that $\psi(\rho_1) = \rho_2,$ and then removing the facet formed by identifying $\sigma_1$ and $\sigma_2.$  In this case we call $\Delta^\psi_x$ a {\bf vertex folding} of $\Delta$  at $x.$   In a similar spirit, $\Delta$ is a {\bf vertex unfolding} of $\Delta^\psi_x.$ 
\end{definition}

An alternative description of a vertex folding at $x$ for a pseudomanifold is sometimes helpful.  First remove $x$ and its incident faces from the complex. This leaves a pseudomanifold with boundary equal to the link of $x$ in $\Delta.$     Identify the two codimension-one faces $\sigma_1 \setminus x$ and $\sigma_2 \setminus x$ via $\psi.$ This is still a pseudomanifold with boundary.  Now cone off the boundary of this complex with $\bar{x}.$  

As with handle additions,  straightforward computations show that if $\Delta^\psi_x$ is obtained from a $d$-dimensional $\Delta$ by a vertex folding at $x$, then

 \begin{equation} \label{folding g2}
g_2(\Delta^\psi_x) = g_2(\Delta)+\binom{d+1}{2},
\end{equation}

\begin{equation} \label{folding linkx}
\lk(\bar{x}, \Delta^\psi_x) = \lk(x,\Delta)^\psi.
\end{equation}

\noindent Now suppose $y \in \sigma_1,~ y \neq x.$ Then $\lk y \cap \lk \psi(y) = \{x\}.$  Relabel $x \in \lk \psi(y)$ as a distinct vertex, say $x'$ and redefine $\psi$ appropriately.  Then

\begin{equation} \label{folding linky}
\lk(\bar{y}, \Delta^\psi_x) = \lk(y,\Delta) \#_\psi~\lk(\psi(y), \Delta).
\end{equation}

Since the link of $\bar{x}$ is formed by handle addition, a complex obtained by a vertex folding at $x$ in  a triangulation of  a  manifold is almost never a  triangulation of a  manifold.  An interesting exception to this is in dimension two.  Depending on the specific identifications, handle addition on a circle can result in either two disjoint circles or one circle.  For an instructive example, one can  check that the six-vertex triangulation of the projective plane is a vertex folding of an eight-vertex stacked two-sphere.  To a certain extent this example is the basis for  edge folding in a $3$-dimensional normal pseudomanifold.

The definition of an edge folding follows the same pattern as the definition for vertex folding.

\begin{definition} 
Let $\sigma_1$ and $\sigma_2$ be facets of a simplicial complex $\Delta$ whose intersection is an edge $uv.$  A bijection $\psi:\sigma_1 \to \sigma_2$ is {\bf edge folding admissible} if $\psi(u)=u,~\psi(v)=v,$ and for all other vertices $y$ of $\sigma_1$ all paths of length two or less from $y$ to $\psi(y)$ go through either $u$ or $v.$ As before, identify all faces $\rho_1 \subseteq \sigma_1$ and $\rho_2 \subseteq \sigma_2$ such that $\psi:\rho_1 \to \rho_2$ is a bijection.  The complex obtained by removing the facet resulting from identifying $\sigma_1$ and $\sigma_2$ is denoted $\Delta^\psi_{uv}$ and is called an {\bf edge folding of $\Delta$ at $uv$.}  As with vertex folding, $\Delta$ is an {\bf edge unfolding} of $\Delta^\psi_{uv}.$  
\end{definition}

Let $\Delta^\psi_{uv}$ be an edge folding of $\Delta$ at $uv$  and assume that $\Delta$ is a normal $d$-pseudomanifold.  As before, let $\bar{x}$ be the image of $x$ in $\Delta^\psi_{uv}.$   Routine checks show that $\Delta^\psi_{uv}$ is a pseudomanifold, 

\begin{equation} \label{edge folding g2} 
g_2(\Delta^\psi_{uv}) = g_2(\Delta)+\binom{d}{2},
\end{equation}
\noindent and
\begin{equation} \label{edge folding u link}
 \lk(u, \Delta^\psi_{uv}) = (\lk(u,\Delta)^\psi_v).
\end{equation}
\noindent   A symmetric formula holds for the link of $v.$  
 For $y \in \sigma_1, y \notin \{u,v\}$ relabel $u$ and $v$ in $\lk(\psi(y),\Delta)$ by $u'$ and $v'$ and redefine $\psi:\sigma_1 \setminus y \to \sigma_2 \setminus \psi(y)$ appropriately.  Then
\begin{equation} \label{edge folding y link}
  \lk(\bar{y}, \Delta^\psi_{uv}) = \lk(y, \Delta) \#_\psi \lk(\psi(y), \Delta).
\end{equation}

When $d=3$ an edge folding of a normal $d$-pseudomanifold may not be normal. Suppose $\sigma_1 = uvxy.$   The link of $uv$ in $\Delta$ is a circle. If the link of $uv$ is given by the circle $C(x,y,a, \dots, b,\psi(x), \psi(y),c, \dots,d),$ then in $\Delta^\psi_{uv}$ the link of $uv$ will be $C(\bar{x}, b, \dots,a, \bar{y}, c \dots, d)$, a circle.   However, if the link of $uv$ is $C(x,y,a, \dots, b,\psi(y), \psi(x),c, \dots,d),$ then in the edge folding at $uv$ the link of $uv$ consists of two disjoint circles $C(\bar{x}, c \dots, d)$ and $C(\bar{y}, b, \dots, a).$ The links of $u$ and $v$ in $\Delta^\psi_{uv}$ are now (single) pinched surfaces.

For the remainder of this section we assume that we are working in dimension three. The terms missing tetrahedron and missing triangle are well known in combinatorial simplicial topology.   A {\bf missing tetrahedron} of $\Delta$ is a quadruple $abcd$ such that $abcd \notin \Delta,$ but $\partial (abcd) \subseteq \Delta.$ A {\bf missing triangle} of $\Delta$ is a triple $abc$ such that $abc \notin \Delta$, but $\partial (abc) \subseteq \Delta.$   In all four constructions, handle addition, connected sum, vertex folding and edge folding, a missing tetrahedron is created when the facet formed by identifying  $\sigma_1$ and $\sigma_2$ is removed.  It is possible to recognize a complex as coming from one of these constructions by examining these missing  tetrahedra.

Suppose $abcd$ is a missing tetrahedron of $\Delta.$  Then for each vertex $x \in \{a,b,c,d\}$ the other three vertices form a missing triangle in the link of $x.$  We say this triangle {\bf separates} the link if its complement consists of exactly two components.  If $\Delta$ was formed via the connected sum or handle addition construction, then all of these triangles separate their corresponding links.     

\begin{lemma} \label{connected sum and handle addition}
Let $\Delta$ be a normal three-dimensional pseudomanifold and suppose $\tau$ is a missing tetrahedron in $\Delta.$  If for every vertex $x \in \tau$  the missing triangle formed by the other three vertices separates the link of $x,$  then $\Delta$ was formed using handle addition or connected sum.  
\end{lemma}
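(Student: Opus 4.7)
The plan is to reverse-engineer the connected sum or handle addition construction by splitting $\Delta$ along the missing tetrahedron $\tau = abcd$. For each vertex $x \in \tau$, the hypothesis says that the $3$-cycle $\tau \setminus \{x\}$ separates $\lk(x, \Delta)$ into two subcomplexes $L_x^+$ and $L_x^-$ whose intersection is exactly that cycle. I will coherently label these pieces with $+$ and $-$ across all four links, split each vertex of $\tau$ into two new vertices accordingly, and adjoin two new facets $a^+b^+c^+d^+$ and $a^-b^-c^-d^-$. The resulting complex $\Delta'$ should be a normal pseudomanifold whose identification along these two new facets recovers $\Delta$.

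The coherent labeling exploits the pairwise compatibility provided by the six edges of $\tau$. For each pair $x, y \in \tau$, normality makes $\lk(xy, \Delta)$ a single circle which contains the two remaining vertices of $\tau$; these split the circle into two arcs. Each arc corresponds, via the edges at $y$ in $\lk(x)$, to one of the sides $L_x^\pm$, and simultaneously, via the edges at $x$ in $\lk(y)$, to one of $L_y^\pm$, inducing a pairwise matching between the sides of $\lk(x)$ and $\lk(y)$. The main technical obstacle is to verify that the six pairwise matchings assemble consistently: the labels form a $\ZZ/2\ZZ$-assignment to four variables subject to six compatibility constraints. I expect this to follow by going around each triangle $xyz$ of $\partial \tau$ and checking that the composition of the three matchings $x \to y \to z \to x$ is the identity, using that the arcs of $\lk(xy)$, $\lk(yz)$, $\lk(xz)$ meet consistently at the two points of the $0$-sphere $\lk(xyz)$.

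Once consistent labels exist, define $\Delta'$ by replacing each $x \in \tau$ with two vertices $x^+, x^-$: a facet $\rho$ of $\Delta$ containing $x$ is assigned $x^+$ if the triangle $\rho \setminus \{x\}$ lies in $L_x^+$, and $x^-$ otherwise. Adjoin the two new facets $a^+b^+c^+d^+$ and $a^-b^-c^-d^-$. The link of $x^\pm$ in $\Delta'$ is precisely $L_x^\pm$ with the triangle $\tau \setminus \{x\}$ filled in, hence a connected closed surface; a routine case analysis on codimension-one faces (grouped by how many vertices of $\tau$ they contain) shows that $\Delta'$ is a normal $3$-pseudomanifold. The bijection $\psi: x^+ \mapsto x^-$ between the two new facets is admissible: any path of length $\le 2$ in $G(\Delta')$ from $x^+$ to $x^-$ would demand a common neighbor lying in both $L_x^+$ and $L_x^-$, forcing it onto the cycle $\tau \setminus \{x\}$, but those vertices have themselves been split into distinct $\pm$ copies.

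Finally, identifying via $\psi$ and removing the identified facet yields $\Delta = (\Delta')^\psi$. If $\Delta'$ is connected then this is a handle addition; otherwise (when $\Delta'$ breaks into two components respectively containing $a^+b^+c^+d^+$ and $a^-b^-c^-d^-$) it is a connected sum. The entire argument hinges on the coherence step, and the separation hypothesis at all four vertices of $\tau$ is precisely what lets the local splittings near each vertex glue into a single global splitting along the missing $2$-sphere $\partial \tau$.
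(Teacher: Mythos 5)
Your proposal is correct in outline but takes a genuinely different route from the paper. The paper's proof is topological and very short: it reduces the lemma to showing that $|\partial \tau|$ separates a small neighborhood of itself in $|\Delta|$ into two components, gets local two-sidedness at vertices from the separation hypothesis (and at non-vertices from normality), assembles these local choices into a $\ZZ/2\ZZ$-bundle over $|\partial\tau|\cong S^2$ which is trivial by simple connectivity, and then cites Bagchi--Datta [Lemma 3.3] for the passage from ``separating missing $2$-sphere'' to ``handle addition or connected sum.'' You instead build the unidentified complex $\Delta'$ explicitly by splitting each vertex of $\tau$ into $x^+,x^-$, which is exactly the style the paper itself uses later for recognizing vertex and edge foldings (Lemmas \ref{lemma:missingtetra2} and \ref{lemma:missingtetra3}); your argument is self-contained where the paper outsources the hard part to a citation, at the cost of the case analysis you only sketch. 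The one step you defer --- coherence of the six pairwise matchings --- is the real content and corresponds precisely to the triviality of the paper's $\ZZ/2\ZZ$-bundle; it does close, and essentially as you predict: for a triangle $xyz$ of $\partial\tau$ with $\lk(xyz)=\{p,n\}$, the vertex $p$ lies off all three separating curves, and each of $\lk(xy),\lk(yz),\lk(xz)$ is a circle whose two arcs (cut by the remaining two vertices of $\tau$) land on opposite sides of the relevant links, with the arc through $p$ landing on the side containing $p$; hence each of $M_{xy},M_{yz},M_{zx}$ matches the sides containing $p$, and their composite is the identity. Since the triangles generate the cycle space of $K_4$, a global $\pm$ labeling exists. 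You should also record the facet-level consistency (a facet meeting $\tau$ in two or three vertices gets compatible signs at each of them, again because its residual edge or vertex lies in a single arc of the corresponding edge link), after which the admissibility of $\psi$ and the identity $\Delta=(\Delta')^\psi$ follow as you say.
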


\begin{proof}
It is sufficient to prove that  $|\partial \tau|$ divides a small neighborhood of $|\partial \tau|$ in $|\Delta|$  into exactly two components.   For a detailed proof of why this is sufficient see \cite[Lemma 3.3]{BagchiDatta}.  So let $x$ be a point in  $|\partial \tau|.$  If $x$ is not a vertex, then, since $\Delta$ is a normal pseudomanifold,  small metric balls around $x$ are homeomorphic to a ball and $|\partial \tau|$ divides the ball into two components.  The hypothesis that the missing triangles divide the vertex links into two components implies that  $|\partial \tau|$ also divides small metric balls around vertices into two components.  Compactness of  $|\partial \tau|$ implies that all of these components can be used to glue together  a $\mathbb{Z}/2 \mathbb{Z}$-bundle over  $|\partial \tau|.$  Since  $|\partial \tau|$ is a two-sphere, the bundle must be trivial and hence  $|\partial \tau|$ divides a small neighborhood of itself into exactly two components.   
\end{proof}

How do we recognize a vertex folding?  Suppose $\Delta$ was obtained as a vertex folding at $x$ with $a= \bar{x}$ and $abcd$ the removed facet.  Then for $y \in \{b,c,d\}$ the triangle formed by the other three vertices separates the link of $y$ while $bcd$ does not separate the link of $a.$ In preparation for proving that this property characterizes triangulations which come from the vertex folding operation, we prove the following lemma. 
\begin{lemma}\label{lemma:missingtetra1}
Let $\Delta$ be a 3-dimensional normal pseudomanifold. Let $\tau=abuv$ be a missing tetrahedron such that for $x\in\{a,b\}$, $\partial(\tau \setminus\{x\}])$ separates  $\lk(x,\Delta).$ Then a small neighborhood of  $|\partial(abv)|$ in $|\lk(u,\Delta)|$ is an annulus if and only if a small neighborhood of  $|\partial(abu)|$ in $|\lk(v,\Delta)|$ is an annulus.
\end{lemma}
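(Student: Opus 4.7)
The plan is to set up a branched double cover of the $2$-sphere $|\partial \tau|$ whose branch points are exactly the vertices $p \in \{a,b,u,v\}$ at which $C_p := |\partial(\tau \setminus \{p\})|$ is one-sided in $|\lk(p,\Delta)|$, and then to use that $|\partial \tau|$ is a $2$-sphere to force the number of such branch points to be even. First I would record a general topological observation: a simple closed curve that separates a surface is automatically two-sided, since each of the two components bordering it supplies a collar. The hypothesis therefore reduces to the statement that $C_a$ and $C_b$ both have annular neighborhoods in their respective links, so what must be shown is that $C_u$ has an annular neighborhood in $|\lk(u,\Delta)|$ if and only if $C_v$ has an annular neighborhood in $|\lk(v,\Delta)|$.

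Next I would take a small regular neighborhood $N$ of $|\partial \tau|$ in $|\Delta|$ and analyze the nearest-point retraction $\pi : \partial N \to |\partial \tau|$. Off the four vertices, $|\Delta|$ is a $3$-manifold in which $|\partial \tau|$ sits as a locally two-sided surface, so $\pi$ is an unramified double cover there. Near a vertex $p$, $\Star(p,\Delta)$ is the cone on $|\lk(p,\Delta)|$ with apex $p$, and $|\partial \tau|$ is the cone $D_p$ on $C_p$; a regular neighborhood of $D_p$ inside this cone is the cone on a tubular neighborhood of $C_p$ in $|\lk(p,\Delta)|$. In the two-sided case this tubular neighborhood is an annulus $A_p$ with $\partial A_p = S^1 \sqcup S^1$, and the cone on this boundary is two disks sharing only the apex, giving over a punctured neighborhood of $p$ the trivial (disconnected) double cover. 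In the one-sided case the tubular neighborhood is a M\"{o}bius band $M_p$ with $\partial M_p = S^1$ a connected degree-$2$ cover of $C_p$, and the cone on this boundary is a single disk giving the nontrivial (connected) double cover of the punctured neighborhood.

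Finally, the punctured sphere $|\partial \tau| \setminus \{a,b,u,v\}$ has fundamental group generated by small loops $\gamma_a,\gamma_b,\gamma_u,\gamma_v$ around the four vertices, subject to the single relation $\gamma_a \gamma_b \gamma_u \gamma_v = 1$. A double cover is classified by a homomorphism to $\mathbb{Z}/2\mathbb{Z}$, so the four monodromies must sum to zero modulo $2$. The first step ensures that the monodromies at $a$ and $b$ vanish, so the monodromies at $u$ and $v$ must agree in $\mathbb{Z}/2\mathbb{Z}$, which by the second step says exactly that $C_u$ has an annular neighborhood if and only if $C_v$ does.

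The main technical obstacle is the second step, namely identifying $\partial N$ and the retraction $\pi$ rigorously near each vertex $p$, and checking that the local double cover really is trivial (respectively nontrivial) precisely when $C_p$ has an annular (respectively M\"{o}bius) neighborhood. Once this local model is pinned down, the parity argument on $S^2$ is purely formal.
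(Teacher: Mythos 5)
Your argument is correct, but it is a genuinely different route from the one in the paper. The paper proves this lemma by hand: it names the two vertices in the link of each of the four triangles of $\tau$ ($p_i$ and $n_i$), uses the separating hypothesis to sort them into the two sides $S^{\pm}_a$ and $S^{\pm}_b$, and then traces explicit paths through $\lk(u,\Delta)$ and $\lk(v,\Delta)$ (Figures \ref{fig:lk(a&b)} and \ref{fig:lk(u&v)}) to see that the circle $\lk(v,\lk(u,\Delta))$ either joins $p_1$ to $p_3$ and $n_1$ to $n_3$ (both neighborhoods annuli) or crosses over (both M\"obius). Your proof replaces this bookkeeping with a monodromy parity argument: the $\ZZ/2\ZZ$-bundle of local sides of $|\partial\tau|$ is defined over the four-punctured sphere (normality guarantees $|\Delta|$ is a manifold along the open edges and triangles of $\partial\tau$), its monodromy around a puncture $p$ is the two-sidedness obstruction of $C_p$ in $|\lk(p,\Delta)|$, the relation $\gamma_a\gamma_b\gamma_u\gamma_v=1$ forces the four monodromies to sum to zero, and separating curves are two-sided, so the monodromies at $u$ and $v$ must agree. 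This is in fact the natural branched extension of the paper's own proof of Lemma \ref{connected sum and handle addition}, which glues exactly this side-bundle over the unpunctured sphere; your version buys a shorter, more conceptual proof that also makes transparent why only the parity of the number of one-sided vertices matters, at the cost of the local identification you flag (which is cleanest if you work with the abstract bundle of local sides restricted to a small loop around $p$, identified with the side-bundle of $C_p$ in $|\lk(p,\Delta)|$, rather than with the frontier of a coned-off regular neighborhood, whose frontier has a radial piece that needs to be discarded). The paper's computation has the compensating advantage that its explicit labelling of $S^{\pm}_x$, $p_i$, $n_i$ is reused verbatim in the constructions of Lemmas \ref{lemma:missingtetra2} and \ref{lemma:missingtetra3}, which your argument would not supply.
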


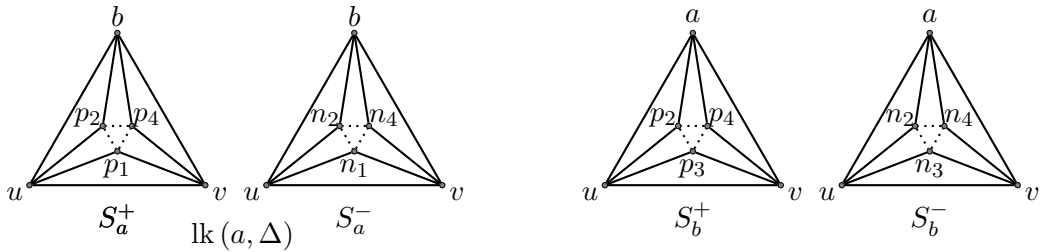
\begin{figure}[ht]
\tikzstyle{ver}=[]
\tikzstyle{vertex}=[circle, draw, fill=black!50, inner sep=0pt, minimum width=2pt]
\tikzstyle{edge} = [draw,thick,-]
\centering
\begin{tikzpicture}[scale=0.45]
\begin{scope}[shift={(-12,0)}]
\foreach \x/\y in {330/d,90/a,210/c}{
    \node[vertex] (\y) at (\x:3){};
}

\foreach \x/\y in {330/v,90/b,210/u}{
\node[ver] () at (\x:3.5){$\y$};
}

\foreach \x/\y in {30/p_1,150/p_2,270/p_3}{
    \node[vertex] (\y) at (\x:0.5){};
}

\foreach \x/\y in {30/p_4,150/p_2,270/p_1}{
\node[ver] () at (\x:1){$\y$};
}

\foreach \x/\y in {d/a,a/c,c/d,a/p_1,d/p_1,a/p_2,c/p_2,c/p_3,d/p_3}{
\path[edge] (\x) -- (\y);}

\foreach \x/\y in {p_1/p_2,p_2/p_3,p_3/p_1}{
\path[edge, dotted] (\x) -- (\y);}
\end{scope}

\begin{scope}[shift={(-5,0)}]
\foreach \x/\y in {330/d,90/a,210/c}{
    \node[vertex] (\y) at (\x:3){};
}

\foreach \x/\y in {330/v,90/b,210/u}{
\node[ver] () at (\x:3.5){$\y$};
}

\foreach \x/\y in {30/n_1,150/n_2,270/n_3}{
    \node[vertex] (\y) at (\x:0.5){};
}

\foreach \x/\y in {30/n_4,150/n_2,270/n_1}{
\node[ver] () at (\x:1){$\y$};
}

\foreach \x/\y in {d/a,a/c,c/d,a/n_1,d/n_1,a/n_2,c/n_2,c/n_3,d/n_3}{
\path[edge] (\x) -- (\y);}

\foreach \x/\y in {n_1/n_2,n_2/n_3,n_3/n_1}{
\path[edge, dotted] (\x) -- (\y);}
\end{scope}

\begin{scope}[shift={(5,0)}]
\foreach \x/\y in {330/d,90/a,210/b}{
    \node[vertex] (\y) at (\x:3){};
}

\foreach \x/\y in {330/v,90/a,210/u}{
\node[ver] () at (\x:3.5){$\y$};
}

\foreach \x/\y in {30/p_4,150/p_2,270/p_3}{
\node[ver] (\y) at (\x:1){$\y$};
    \node[vertex] (\y) at (\x:0.5){};
}

\foreach \x/\y in {d/a,a/b,b/d,a/p_4,d/p_4,a/p_2,b/p_2,b/p_3,d/p_3}{
\path[edge] (\x) -- (\y);}
\foreach \x/\y in {p_4/p_2,p_2/p_3,p_3/p_4}{
\path[edge, dotted] (\x) -- (\y);}
\end{scope}

\begin{scope}[shift={(12,0)}]
\foreach \x/\y in {330/d,90/a,210/b}{
    \node[vertex] (\y) at (\x:3){};
}

\foreach \x/\y in {330/v,90/a,210/u}{
\node[ver] () at (\x:3.5){$\y$};
}

\foreach \x/\y in {30/n_4,150/n_2,270/n_3}{
\node[ver] (\y) at (\x:1){$\y$};
    \node[vertex] (\y) at (\x:0.5){};
}

\foreach \x/\y in {d/a,a/b,b/d,a/n_4,d/n_4,a/n_2,b/n_2,b/n_3,d/n_3}{
\path[edge] (\x) -- (\y);}
\foreach \x/\y in {n_4/n_2,n_2/n_3,n_3/n_4}{
\path[edge, dotted] (\x) -- (\y);}

\end{scope}

\node[ver] () at (-12,-2.5){$S^+_a$ };
\node[ver] () at (-5,-2.5){$S^-_a$ };
\node[ver] () at (-8.3,-3){$\lk(a,\Delta)$ };
\node[ver] () at (-12,-2.5){$S^+_a$ };
\node[ver] () at (5,-2.5){$S^+_b$ };
\node[ver] () at (12,-2.5){$S^-_b$ };

\end{tikzpicture}
\caption{$\lk(a,\Delta)$ and $\lk(b,\Delta)$ in $\Delta$.} \label{fig:lk(a&b)}
\end{figure}

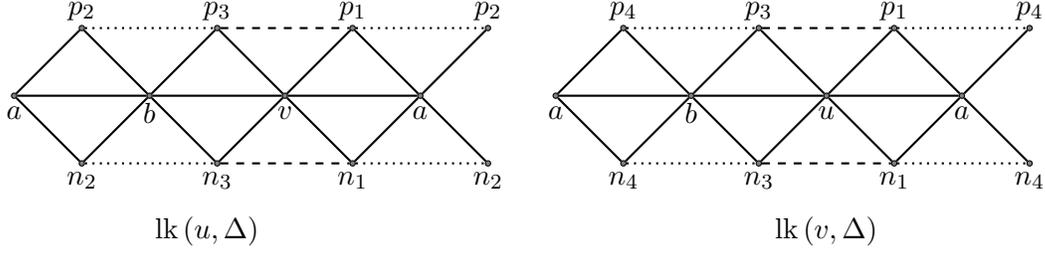
\begin{figure}[ht]
\tikzstyle{ver}=[]
\tikzstyle{vertex}=[circle, draw, fill=black!50, inner sep=0pt, minimum width=2pt]
\tikzstyle{edge} = [draw,thick,-]
\centering
\begin{tikzpicture}[scale=0.45]

\begin{scope}[shift={(-8,0)}]
\foreach \x/\y/\z in {-6/1/b,-2/1/c,2/1/a,6/1/b1}{
\node[vertex] (\z) at (\x,\y){};
}

\foreach \x/\y/\z in {-6/0.5/a,-2/0.5/b,2/0.5/v,6/0.5/a}{
\node[ver] () at (\x,\y){$\z$};
}

\foreach \x/\y/\z in {-4/3/p_2,0/3/p_4,4/3/p_1,8/3/p2}{
\node[vertex] (\z) at (\x,\y){};
}

\foreach \x/\y/\z in {-4/3.5/p_2,0/3.5/p_3,4/3.5/p_1,8/3.5/p_2}{
\node[ver] () at (\x,\y){$\z$};
}

\foreach \x/\y/\z in {-4/-1/n_2,0/-1/n_4,4/-1/n_1,8/-1/q2}{
\node[vertex] (\z) at (\x,\y){};
}

\foreach \x/\y/\z in {-4/-1.5/n_2,0/-1.5/n_3,4/-1.5/n_1,8/-1.5/n_2}{
\node[ver] () at (\x,\y){$\z$};
}

\foreach \x/\y in {b/c,c/a,a/b1,b/p_2,c/p_2,b/n_2,c/n_2,c/p_4,a/p_4,c/n_4,a/n_4,a/p_1,b1/p_1,a/n_1,b1/n_1,
b1/p2,b1/q2}{
\path[edge] (\x) -- (\y);}

\foreach \x/\y in {p_2/p_4,p_1/p2,n_2/n_4,n_1/q2}{
\path[edge, dotted] (\x) -- (\y);}
\path[edge, dashed] (p_4) -- (p_1);
\path[edge, dashed] (n_4) -- (n_1);

\end{scope}

\begin{scope}[shift={(8,0)}]
\foreach \x/\y/\z in {-6/1/b,-2/1/c,2/1/d,6/1/b1}{
\node[vertex] (\z) at (\x,\y){};
}

\foreach \x/\y/\z in {-6/0.5/a,-2/0.5/b,2/0.5/u,6/0.5/a}{
\node[ver] () at (\x,\y){$\z$};
}

\foreach \x/\y/\z in {-4/3/p_2,0/3/p_4,4/3/p_1,8/3/p2}{
\node[vertex] (\z) at (\x,\y){};
}

\foreach \x/\y/\z in {-4/3.5/p_4,0/3.5/p_3,4/3.5/p_1,8/3.5/p_4}{
\node[ver] () at (\x,\y){$\z$};
}

\foreach \x/\y/\z in {-4/-1/n_2,0/-1/n_4,4/-1/n_1,8/-1/q2}{
\node[vertex] (\z) at (\x,\y){};
}

\foreach \x/\y/\z in {-4/-1.5/n_4,0/-1.5/n_3,4/-1.5/n_1,8/-1.5/n_4}{
\node[ver] () at (\x,\y){$\z$};
}

\foreach \x/\y in {b/c,c/d,d/b1,b/p_2,c/p_2,b/n_2,c/n_2,c/p_4,d/p_4,c/n_4,d/n_4,d/p_1,b1/p_1,d/n_1,b1/n_1,
b1/p2,b1/q2}{
\path[edge] (\x) -- (\y);}

\foreach \x/\y in {p_2/p_4,p_1/p2,n_2/n_4,n_1/q2}{
\path[edge, dotted] (\x) -- (\y);}
\path[edge, dashed] (p_4) -- (p_1);
\path[edge, dashed] (n_4) -- (n_1);

\end{scope}

\node[ver] () at (-8.3,-3){$\lk(u,\Delta)$};
\node[ver] () at (10,-3){$\lk(v,\Delta)$};

\end{tikzpicture}
\caption{$\lk(u,\Delta)$ and $\lk(v,\Delta)$ in $\Delta$.}\label{fig:lk(u&v)}
\end{figure}

\begin{proof}
Let $\lk(auv,\Delta)=\{p_1,n_1\}$, $\lk(abu,\Delta)=\{p_2,n_2\}$, $\lk(buv,\Delta)=\{p_3,n_3\}$ and $\lk(abv,\Delta)$ $=\{p_4,n_4\}$ in $\Delta$. The closed curve $\partial(buv)$ in $\lk(a,\Delta)$ separates $\lk(a,\Delta)$ into two parts which we denote by $S^{+}_a$ and $S^{-}_a$. Without loss of generality, we can assume that $p_1,p_2,p_4\in S^{+}_a$ and $n_1,n_2,n_4\in S^{-}_a$. The closed curve $\partial(auv)$  in $\lk(b,\Delta)$ separates $\lk(b,\Delta)$ into two parts $S^{+}_b$ and $S^{-}_b$. Then there is a path $P(u,p_2,...,p_4,v) \in \lk(b,\lk(a,\Delta))$, which also lies in $\lk(a,\lk(b,\Delta))$, and hence  either in $S^{+}_b$ on in $S^{-}_b$. Without loss of generality, we can assume that  the path $P(u,p_2,...,p_4,v)$ lies in $S^{+}_b$, and hence $p_2,p_4 \in S^{+}_b$ and $n_2,n_4\in S^{-}_b$. Similarly we can assume that $p_3 \in S^{+}_b$ and $n_3 \in S^{-}_b$. See Figure \ref{fig:lk(a&b)} for $\lk(a,\Delta)$ and $\lk(b,\Delta).$ A dotted line between $p_i$ and $p_j$ represents a path between those two vertices. Here, $p_i$ and $p_j$ (resp., $n_i$ and $n_j$) may be equal but $p_i \neq n_j$ for all $i, j\in \{1,2,4\}$ or $\{2,3,4\}$. Further, the paths $P(p_i,...,p_j)$ and $P(p_k,...,p_l)$ (resp., $P(n_i,...,n_j)$ and $P(n_k,...,n_l)$) may intersect but $P(p_i,...,p_j)$ and $P(n_k,...,n_l)$ do not intersect for  $i,j,k,l \in \{1,2,4\}$ or $\{2,3,4\}$. 

Now there are paths $P(b,p_2,...,p_1,v)$, $P(b,n_2,...,n_1,v)$  in $\lk(u,\lk(a,\Delta))= \lk(a,\lk(u,\Delta))$ and   paths $P(a,p_2,...,p_3,v)$, $P(a,n_2,...,n_3,v)$  in $\lk(u,\lk(b,\Delta))=\lk(b,\lk(u,\Delta))$. Thus, a portion  of $\lk(u,\Delta),$ including the dotted lines, is as in Figure \ref{fig:lk(u&v)}. Similarly, there are paths $P(b,p_4,...,p_1,u)$, $P(b,n_4,...,n_1,u)$  in $\lk(v,\lk(a,\Delta))= \lk(a,\lk(v,\Delta)),$ and paths $P(a,p_4,...,p_3,u)$, $P(a,n_4,...,n_3,u)$  in $\lk(v,\lk(b,\Delta))=\lk(b,\lk(v,\Delta))$. Thus,  a portion  of $\lk(v,\Delta),$ including the dotted lines, is also as in Figure \ref{fig:lk(u&v)}.

Since $\lk(v,\lk(u,\Delta))$ is a circle, either there are paths $P(a,p_1,...,p_3,b)$ and $P(a,n_1,...,n_3,b)$ in $\lk(v,\lk(u,\Delta))=\lk(u,\lk(v,\Delta))$, or there are paths $P(a,p_1,...,n_3,b)$ and $P(a,n_1,...,p_3,b)$ in $\lk(v,\lk(u,\Delta))=\lk(u\lk(v,\Delta))$.  In the first case, small neighborhoods of  $|\partial(abv)|$ in $|\lk(u,\Delta)|$ and    $|\partial(abu)|$ in $|\lk(v,\Delta)|$ are annuli.  In the second case small neighborhoods of the same   missing triangles are M\"obius strips.


\end{proof}

\begin{lemma}\label{lemma:missingtetra2}
Let $\Delta$ be a 3-dimensional normal pseudomanifold. Let $\tau=abcd$ be a missing facet in $\Delta$ such that $(i)$ for $x\in\{b,c,d\}$, $\partial(\tau \setminus \{x\})$ separates  $\lk(x,\Delta),$ and $(ii)$  $\partial(bcd)$  does not separate  $\lk(a,\Delta)$. Then there exists $\Delta'$ a three-dimensional normal pseudomanifold such that  $\Delta = (\Delta')^\psi_a$ is obtained from a vertex folding at $a \in \Delta'$  and $abcd$ is the image of the removed facet.  

\end{lemma}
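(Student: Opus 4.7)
The plan is to reverse the vertex folding at $a$. I construct $\Delta'$ by duplicating each of $b, c, d$ to a new vertex $b', c', d'$ and adjoining two new facets $\sigma_1 = abcd$ and $\sigma_2 = ab'c'd'$, with the duplication dictated by consistently chosen ``sides'' of separating subsurfaces in the appropriate links.

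First I establish the geometry near $a$. Apply Lemma \ref{lemma:missingtetra1} with its roles $\{a, b\}$ played by $\{b, c\}$ and $\{u, v\}$ by $\{a, d\}$; both hypotheses reduce to condition (i) for $b$ and $c$. Condition (i) for $d$ makes $\partial(abc)$ separate $\lk(d, \Delta)$, so its neighborhood there is automatically an annulus, and Lemma \ref{lemma:missingtetra1} then forces the neighborhood of $|\partial(bcd)|$ in $|\lk(a, \Delta)|$ to be an annulus as well. Together with (ii), $\partial(bcd)$ is a non-separating, two-sided triangular cycle in the closed surface $\lk(a, \Delta)$; fix $+, -$ labels for its two sides.

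For each $v \in \{b, c, d\}$, condition (i) splits $\lk(v, \Delta)$ into two closed pieces $\lk(v)^+$ and $\lk(v)^-$ meeting along $\partial(\tau \setminus v)$; choose the labels so that, for tetrahedra in $\Star(a, \Delta)$, the side label in $\lk(v)$ matches the corresponding side label in the annular neighborhood of $\partial(bcd)$ in $\lk(a)$. Introduce new vertex symbols $b', c', d'$, and form $\Delta'$ from $\Delta$ by: (a) for each tetrahedron $\rho$ of $\Delta$ and each $v \in \rho \cap \{b, c, d\}$, relabel $v \mapsto v'$ in $\rho$ whenever $\rho \setminus v \in \lk(v)^-$; (b) add two facets $\sigma_1 = abcd$ and $\sigma_2 = ab'c'd'$ with their subfaces. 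The link of $a$ in $\Delta'$ is then the closed surface obtained from $\lk(a, \Delta)$ by cutting along $\partial(bcd)$ and capping the two resulting triangular boundary cycles with $bcd$ and $b'c'd'$.

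The main obstacle is \emph{consistency}: when a tetrahedron $\rho \in \Delta$ contains two of $b, c, d$, say $\{b, c\} \subseteq \rho$, the side label coming from $\lk(b)^\pm$ must match the one from $\lk(c)^\pm$. The circle $\lk(bc, \Delta)$ meets both $\partial(acd)$ and $\partial(abd)$ exactly in $\{a, d\}$, cutting $\lk(bc)$ into the same two arcs; each arc inherits a $+/-$ label from either $\lk(b)^\pm$ or $\lk(c)^\pm$, and both are determined by the common choice at the annular neighborhood near $a$, hence agree. Analogous verifications handle $\{b, d\}$ and $\{c, d\}$. After consistency, no tetrahedron of $\Delta'$ mixes primed and unprimed copies of $\{b, c, d\}$, so $\Delta'$ is a simplicial complex. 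One verifies $\Delta'$ is a normal 3-pseudomanifold: $\lk(a, \Delta')$ is the cut-and-capped surface above; for $v \in \{b, c, d\}$ both $\lk(v, \Delta')$ and $\lk(v', \Delta')$ are obtained from $\lk(v)^{\pm}$ (with primed relabels as needed) by capping with a triangle, so are closed surfaces; and all other vertex links agree with those in $\Delta$. The bijection $\psi: \sigma_1 \to \sigma_2$ with $\psi(a) = a$ and $\psi(v) = v'$ for $v \in \{b, c, d\}$ is vertex folding admissible: a common neighbor $w \ne a$ of $v$ and $v'$ in $\Delta'$ would require tetrahedra $\rho_1 \ni \{v, w\}$ and $\rho_2 \ni \{v', w\}$ in $\Delta'$, whose underlying tetrahedra in $\Delta$ would contain $\{v, w\}$ on opposite sides of $\partial(\tau \setminus v)$, forcing $w \in \partial(\tau \setminus v) \setminus \{v\}$, and consistency rules out $w \in \{b, c, d\}\setminus\{v\}$. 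Direct comparison with the folding recipe then gives $(\Delta')^\psi_a = \Delta$ with removed facet $\sigma_1$.
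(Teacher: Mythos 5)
Your construction is correct and follows essentially the same route as the paper's: both deduce from Lemma \ref{lemma:missingtetra1} that $\partial(bcd)$ is two-sided in $\lk(a,\Delta)$, split each of $b,c,d$ into two copies according to a consistent choice of sides of the separating triangles, and verify consistency of that choice along the circles $\lk(xy)$ for $\{x,y\}\subset\{b,c,d\}$ cut at $a$ and the third vertex. The only differences are cosmetic: the paper anchors the $+/-$ labels at the two vertices of $\lk(bcd,\Delta)$ rather than at the two sides of the annulus around $\partial(bcd)$ in $\lk(a,\Delta)$, and it builds $\Delta'$ by first deleting $a$ and then coning off the modified boundary rather than relabelling the facets of $\Star(a,\Delta)$ directly.
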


\begin{proof}
Since $bcd \in \Delta$ its link consists of two vertices which we designate $p$ and $n$. For $x\in\{b,c,d\}$, $\partial(\tau \setminus\{x\})$ separates  the link of $x$  into two parts $S^{+}_x$ and $S^{-}_x$, where $S^{+}_x$ and $S^{-}_x$ are connected surfaces with common boundary $\partial(\tau\setminus\{x\}), ~p \in S^+_x$ and $n \in S^-_x.$   Let $V^{+}_x$  be the collection of the vertices other than $a,b,c,d$ of $S^{+}_x.$  Similarly, let $V^{-}_x$ be the collection of the vertices other than $a,b,c,d$ of $S^{-}_x.$ Let $E^{\pm}_x=\{wx: w\in V^{\pm}_x\}$. Finally, let $E^{\pm}=E^{\pm}_b\cup E^{\pm}_c\cup E^{\pm}_d$.  By definition $E^+ \cap E^- = \emptyset.$

Suppose  $\sigma \not \subseteq bcd$ is a face of $\Delta$  which does not contain $a$ and intersects $bcd$. Let $E_\sigma$ be the edges of $\sigma$ which have exactly one end point in  $bcd$.  So $E_\sigma \subseteq E^+ \cup E^-.$  In order to construct $\Delta'$ we want to show that $E_\sigma \subset E^+$ or $E_\sigma \subset E^-.$ 
If a face $\sigma$ intersects $bcd$ then, for some $x\in\{b,c,d\}$, $\sigma \in \Star  (x,\Delta)$. Therefore $V(\sigma)\setminus\{b,c,d\}\subset V^{+}_x$ or $V^{-}_x$.   If $V(\sigma)\setminus\{b,c,d\}\subset V^{\pm}_x$, then $V(\sigma)\setminus\{b,c,d\}  \cap V^{\mp}_y = \emptyset$ for all $y \in \{b,c,d\}.$  To see this, consider the link of $xy.$ It is a circle, contains the path $P(p,a,n)$ and the remaining vertex $z$ of $\{b,c,d\} \setminus \{x,y\}.$  So all of the vertices of $\sigma \setminus \{a,b,c,d\}$ all lie in exactly one of the  semicircles with endpoints $a$ and $z,$ one of which contains $p$ and the other $n.$  Hence $E_\sigma$ is a subset of exactly one of  $E^+$ or  $E^-$.

   Let $\tilde\Delta$ be the induced sub-complex of $\Delta$ restricted to  the vertex set $V(\Delta)\setminus \{a\}$. Then $\tilde\Delta$ is a 3-dimensional normal pseudomanifold with boundary $\lk(a,\Delta)$. On our way to finding $\Delta'$ we construct an intermediate complex $\tilde\Delta'$ which is closely related to $\tilde\Delta$ and whose vertex set is $V(\tilde\Delta) \cup \{b',c',d'\}$ where $\{b',c',d'\}$ are three new vertices. Let  $\tilde\sigma$ be a face of $\tilde\Delta$ not contained in $bcd.$  Define $\tilde\sigma'$ to be $\tilde\sigma$ if $\tilde\sigma$ does not intersect $bcd$ or $E_{\tilde\sigma} \subset E^+.$  If $E_{\tilde\sigma} \subset E^-,$ then $\tilde\sigma'$ is defined to be $\tilde\sigma$ with all occurrences of $b,c,$ and $d$ replaced with $b',c',$ and $d'$ respectively.    Now set $\tilde\Delta'$ to be the complex whose facets are $\{\tilde\sigma': \tilde\sigma \mbox{ a facet of } \tilde\Delta\} \cup \{nb'c'd'\}.$
   
   The codimension-one face $bcd$ of $\tilde\Delta'$ is contained in one facet, $pbcd$.  Similarly, the codimension-one face $b'c'd'$ is contained in one facet, $nb'c'd'.$  All other codimension-one faces $\tilde\sigma'$ of $\tilde\Delta'$ are contained in one or two facets depending on whether $\tilde\sigma$ was contained in one or two facets in $\tilde\Delta.$ 
   
   By Lemma \ref{lemma:missingtetra1}  a small neighborhood of $|\partial(bcd)|$ in $|\lk(a,\Delta)|$ is an annulus. Since $\partial (bcd)$ does not separate $\lk (a,\Delta),$ this link was formed via handle addition \cite[Lemma 3.3]{BagchiDatta}.  So write $\lk(a,\Delta) =  S^\psi$, where is $S$ is a connected triangulated compact surface and the image of the domain of $\psi$ is $\{b,c,d\}.$  The triangles of $\tilde\Delta'$ contained in exactly one facet are constructed in exactly the same fashion as $S.$  Hence $\tilde\Delta'$ is a pseudomanifold with boundary $S.$  Finally, set $\Delta'$ to be $\tilde\Delta'$ with its boundary coned off by $a.$  So $\Delta'$ is a pseudomanifold and $\lk(a,\Delta')$ is $S.$  Furthermore, from the construction, $\Delta = (\Delta')^\psi,$ where $\psi:\{a,b',c',d'\} \to \{a,b,c,d\}.$  Two vertices $x,y$ in $\Delta'$ can be connected by a path by concatenating the lift of a path in $\Delta$ from $x$ to $a$ and from $a$ to $y.$ Thus $\Delta'$ is connected.  To finish the proof we show that the links of all other vertices are connected surfaces.   
  
  Let $x \in \{b,c,d\}.$   Since $\lk(x,\Delta)= \lk(x,\Delta') \#_\psi \lk(x',\Delta')$ and the link of $x$ in $\Delta$ is a connected surface, each of the links $\lk (x,\Delta')$ and $\lk(x', \Delta')$ must be connected surfaces.  Lastly, suppose $v$ is a vertex of $\Delta'$ other than $a,b,c,d,b',c'$ or $d'.$  The link of $v$ in $\Delta'$ is the same as the link of $v$ in $\Delta,$ except that for each of $x \in \{b,c,d\}$ with $vx \in E^-,$ $x$ is replaced by $x'$ in the link of $v$ in $\Delta'$. Thus the link of $v$ in $\Delta'$ is a connected surface and $\Delta'$ is a normal pseudomanifold.  
\end{proof}

Similar principles allow us to identify edge foldings via a missing tetrahedron and the topology of the induced missing triangles in the links of its vertices.  
 
\begin{lemma}\label{lemma:missingtetra3}
Let $\Delta$ be a 3-dimensional normal pseudomanifold. Let $\tau=abuv$ be a missing facet in $\Delta$ such that $(i)$ for $x\in\{a,b\}$, $\partial(\tau \setminus \{x\})$ separates  $\lk(x,\Delta),$ and $(ii)$ a small neighborhood of $|\partial(abv)|$ in $|\lk(u,\Delta)|$ is a M\"{o}bius strip. Then there exists $\Delta'$ a three-dimensional normal pseudomanifold such that  $\Delta = (\Delta')^\psi_{uv}$ is obtained from an edge folding at $uv \in \Delta'$  and $abuv$ is the removed facet.
\end{lemma}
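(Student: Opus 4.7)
The plan is to follow the template of the proof of Lemma \ref{lemma:missingtetra2}, adapted to the edge folding setting: whereas vertex folding at $a$ keeps $a$ fixed and splits $b, c, d$, here for edge folding at $uv$ we keep $u$ and $v$ fixed and split the remaining two vertices $a$ and $b$ of $\tau$ into copies $a, a'$ and $b, b'$.

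First, condition (ii) together with Lemma \ref{lemma:missingtetra1} implies that $\partial(abu)$ also has a M\"{o}bius strip neighborhood in $|\lk(v, \Delta)|$ and hence does not separate $\lk(v, \Delta)$. Using condition (i), write $\lk(a, \Delta) = S^+_a \cup S^-_a$ along the separating circle $\partial(buv)$ and $\lk(b, \Delta) = S^+_b \cup S^-_b$ along $\partial(auv)$, where each $S^\pm_x$ is a connected surface with the indicated boundary. Let $V^\pm_x$ be the vertices of $S^\pm_x$ other than $a, b, u, v$, let $E^\pm_x = \{wx : w \in V^\pm_x\}$, and set $E^\pm = E^\pm_a \cup E^\pm_b$, so that $E^+ \cap E^- = \emptyset$. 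The key combinatorial step is a consistency claim: for every face $\sigma$ of $\Delta$ meeting $\{a, b\}$ and not contained in $\{a, b, u, v\}$, letting $E_\sigma$ denote the edges of $\sigma$ joining $\sigma \cap \{a, b\}$ to $\sigma \setminus \{a, b, u, v\}$, either $E_\sigma \subseteq E^+$ or $E_\sigma \subseteq E^-$. The argument examines the circle $\lk(ab, \Delta)$, which contains $u$ and $v$ dividing it into two arcs that correspond to the $+$ and $-$ sides of both splittings; in particular the two vertices of the link of any $2$-face $uab$ or $vab$ are the two neighbors of $u$ or $v$ in this circle and hence lie on opposite arcs. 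The argument then parallels the corresponding step in Lemma \ref{lemma:missingtetra2}.

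With consistency established, define $\Delta'$ with vertex set $V(\Delta) \cup \{a', b'\}$ and facet set $\{\sigma' : \sigma \text{ a facet of } \Delta\} \cup \{uvab, uva'b'\}$, where $\sigma' = \sigma$ if $E_\sigma = \emptyset$ or $E_\sigma \subseteq E^+$, and $\sigma'$ is obtained from $\sigma$ by replacing $a, b$ with $a', b'$ if $E_\sigma \subseteq E^-$. Verification has three parts: each codimension-one face of $\Delta'$ lies in exactly two facets (for faces meeting $\{a, b\}$ the two original facets of $\Delta$ lie on opposite sides by the circle-link analysis, and the new tetrahedra $uvab, uva'b'$ supply the ``other'' facet for $uva, uvb, uab, vab$ and their primed versions); each vertex link is a connected surface (the links of $a, a', b, b'$ are $S^+_a, S^-_a, S^+_b, S^-_b$ with their boundary triangles coned off via the new facets, and the links of $u, v$ are obtained by unfolding their $\Delta$-links at the missing triangles $abv, abu$, producing connected surfaces by condition (ii)); and the map $\psi: uvab \to uva'b'$ with $\psi(u) = u, \psi(v) = v, \psi(a) = a', \psi(b) = b'$ is edge folding admissible and satisfies $(\Delta')^\psi_{uv} = \Delta$.

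The main obstacles are the consistency claim and the verification that condition (ii) is exactly what allows $\lk(u, \Delta)$ and $\lk(v, \Delta)$ to be unfolded into closed surfaces in $\Delta'$; concretely, the M\"{o}bius neighborhood provides the orientation-reversing identification needed for $uv$ to have a single circle link in $\Delta'$ rather than two disjoint circles.
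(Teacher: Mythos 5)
Your proposal is correct and follows essentially the same route as the paper's proof: split $a$ and $b$ into two copies each according to the sides $S^{\pm}_a$, $S^{\pm}_b$ determined by condition $(i)$, establish consistency of the assignment using the circle $\lk(ab,\Delta)$ through $u$ and $v$ (the paper does this via its case analysis $(i)$--$(v)$ on $\sigma\cap abuv$), adjoin the two new facets, and verify that the links of $u$ and $v$ unfold to connected closed surfaces precisely because of the M\"obius strip hypothesis $(ii)$. No substantive differences or gaps.
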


\begin{proof}
Let $\lk(auv,\Delta)=\{p_1,n_1\}$, $\lk(abu,\Delta)=\{p_2,n_2\}$, $\lk(buv,\Delta)=\{p_3,n_3\}$ and $\lk(abv,\Delta)$ $=\{p_4,n_4\}$ in $\Delta$. For $x\in\{a,b\}$, the closed curve $\partial(\tau \setminus \{x\})$ separates $\lk(x,\Delta)$ into two parts which we denote by $S^{+}_x$ and $S^{-}_x$. Then we choose $p_i$, $n_i$ as in the proof of Lemma \ref{lemma:missingtetra1} (cf. Figure \ref{fig:lk(a&b)}). In particular, for $1 \le i \le 4$ and $x \in \{a,b\},~p_i \in S^+_x$ and $n_i \in S^-_x.$ Let $V^{+}_x$  be the collection of the vertices other than $a,b,u,v$ in $S^{+}_x.$    Similarly, let $V^{-}_x$ be the collection of the vertices other than $a,b,u,v$ in $S^{-}_x$. 

Now, we define a new complex $\Delta'$ such that $V(\Delta')=(V(\Delta')\setminus\{a,b\})\sqcup\{a^+,a^-,b^+,b^-\}$. The facets of $\Delta'$ is the set $\{\sigma': \sigma \mbox{ a facet of } \Delta\}\cup \{a^+b^+uv,a^-b^-uv\}$, where $\sigma'$ is obtained from $\sigma$ as follows. $(i)$ If $a,b\not \in \sigma$ then $\sigma'=\sigma$. $(ii)$ If $\sigma \cap abuv = a$ then the other  vertices of $\sigma$ lie either in $V^{+}_a$ or in $V^{-}_a$. Therefore, $\sigma'=\sigma\setminus \{a\} \cup \{a^+\}$ (resp., $=\sigma\setminus \{a\} \cup \{a^-\}$) if $V(\sigma\setminus \{a\}) \subset V^{+}_a$ (resp., $V(\sigma\setminus \{a\}) \subset V^{-}_a$). The case $\sigma \cap abuv = b$ is similar to this. $(iii)$ If $\sigma \cap abuv = au$ then the other two vertices say $w,z$ lie either in $V^{+}_a$ or in $V^{-}_a$ as $uwz$ forms a triangle in the link of $a$. Therefore, $\sigma'=\sigma\setminus \{a\} \cup \{a^+\}$ (resp., $=\sigma\setminus \{a\} \cup \{a^-\}$) if $V(\sigma\setminus \{a,u\}) \subset V^{+}_a$ (resp., $V(\sigma\setminus \{a,u\}) \subset V^{-}_a$). The cases $\sigma \cap abuv = av, bu, bv$ are treated similarly. $(iv)$ If $\sigma \cap abuv = ab$ then the other two vertices say $w,z$ lie either in $V^{+}_a$ or in $V^{-}_a$ as $bwz$ forms a triangle in the link of $a$. If $w,z$ lie in $V^{+}_a$ then $w,z \in P(p_2,\dots,p_4)$ and hence by  construction $w,z$ lie in $V^{+}_b$. Therefore $\sigma'=\sigma\setminus \{ab\} \cup \{a^+b^+\}$. The subcase when $w,z$ lie in $V^{-}_a$ is similar to this. $(v)$ when $\sigma \cap abuv \in \{abu,abv,auv,buv\}$ the remaining vertex $w$ of $\sigma$ is $p_i$ or $n_i$ for some $i.$  When $w$ is $p_i,~\sigma'$ is $\sigma$ with $a^+$ replacing $a$ and $b^+$ replacing $b.$  When $w$ is $n_i$ we use $a^-$ and $b^-$ as replacements for $a$ and $b$ instead.

The bijection $\psi: a^+b^+uv \to a^-b^-uv$ which sends $(a^+,b^+,u,v) \to (a^-,b^-,u,v)$ is  folding admissible at the edge $uv$. Further,  $\Delta$ is obtained from $\Delta'$  by identifying all faces $\rho_1 \subseteq a^+b^+uv $ and $\rho_2 \subseteq a^+b^+uv$ such that $\psi(\rho_1) = \rho_2,$ and then removing the facet formed by identifying $a^+b^+uv$ and $a^-b^-uv$, where $a$ (resp., $b$) is the vertex in $\Delta$ by identifying the vertices $a^+$ and $a^-$ (resp., $b^+$ and $b^-$) in $\Delta'$. So $\Delta = \Delta^\psi_{uv}.$

Now, $\lk(a,\Delta)= \lk(a^+,\Delta') \#_{buv} \lk(a^-,\Delta')$ and $\lk(b,\Delta)= \lk(b^+,\Delta') \#_{auv} \lk(b^-,\Delta')$. Since the links of $a$ and $b$ in $\Delta$ are connected surfaces, each of the links $\lk (x,\Delta')$ for $x=a^+,a^-,b^+,b^-$ must be connected surfaces.  Suppose $y$ is a vertex of $\Delta'$ other than $a^+,b^+,a^-,b^-,u$ or $v$. Then the link of $y$ in $\Delta'$ is the same as the link of $y$ in $\Delta,$ except that for each of $x \in \{a,b\}$ with $y \in S_x^+$ (resp,  $y \in S_x^-$) in $\Delta$, $x$ is replaced by $x^+$ (resp,  $x^-$) in the link of $y$ in $\Delta'$. Now, we consider the link of $u$ and $v$ in $\Delta'$. Since a small neighborhood of $|\partial(abv)|$ in $|\lk(u,\Delta)|$ is a M\"{o}bius strip,  a portion of $\lk(u,\Delta)$ is as in Figure \ref{fig:lk(u)}.

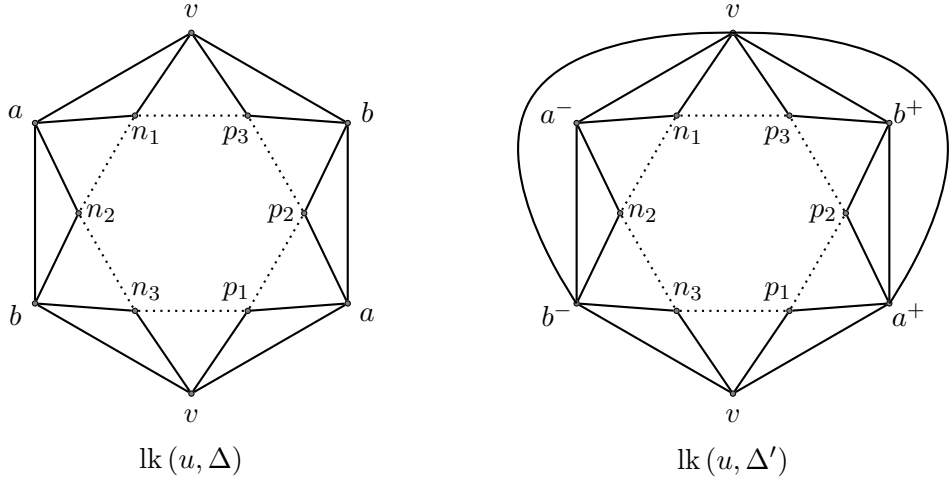
\begin{figure}[ht]
\tikzstyle{ver}=[]
\tikzstyle{vertex}=[circle, draw, fill=black!50, inner sep=0pt, minimum width=2pt]
\tikzstyle{edge} = [draw,thick,-]
\centering
\begin{tikzpicture}[scale=0.6]
\begin{scope}[shift={(-6,0)}]
\foreach \x/\y in {90/v',150/a',210/b'}{
    \node[vertex] (\y) at (\x:4){};
}

\foreach \x/\y in {90/v,150/a,210/b}{
\node[ver] () at (\x:4.5){$\y$};
}

\foreach \x/\y in {270/v,330/a,30/b}{
\node[ver] () at (\x:4.5){$\y$};
    \node[vertex] (\y) at (\x:4){};
}

\foreach \x/\y in {v/a,a/b,b/v',v'/a',a'/b',b'/v}{
\path[edge] (\x) -- (\y);}

\foreach \x/\y in {300/p_1,0/p_2,60/p_3,120/n_1,180/n_2,240/n_3}{
\node[ver] () at (\x:2){$\y$};
    \node[vertex] (\y) at (\x:2.5){};
}

\foreach \x/\y in {v/p_1,a/p_1,a/p_2,b/p_2,b/p_3,v'/p_3,v'/n_1,a'/n_1,a'/n_2,b'/n_2,b'/n_3,v/n_3}{
\path[edge] (\x) -- (\y);}

\foreach \x/\y in {n_1/n_2,n_2/n_3,n_3/p_1,p_1/p_2,p_2/p_3,p_3/n_1}{
\path[edge, dotted] (\x) -- (\y);}
\end{scope}

\begin{scope}[shift={(6,0)}]
\foreach \x/\y in {90/v',150/a',210/b'}{
    \node[vertex] (\y) at (\x:4){};
}

\foreach \x/\y in {90/v,150/a^-,210/b^-}{
\node[ver] () at (\x:4.5){$\y$};
}

\foreach \x/\y in {270/v,330/a,30/b}{
    \node[vertex] (\y) at (\x:4){};
}

\foreach \x/\y in {270/v,330/a^+,30/b^+}{
\node[ver] () at (\x:4.5){$\y$};
}

\foreach \x/\y in {v/a,a/b,b/v',v'/a',a'/b',b'/v}{
\path[edge] (\x) -- (\y);}

\foreach \x/\y in {300/p_1,0/p_2,60/p_3,120/n_1,180/n_2,240/n_3}{
\node[ver] () at (\x:2){$\y$};
    \node[vertex] (\y) at (\x:2.5){};
}

\foreach \x/\y in {v/p_1,a/p_1,a/p_2,b/p_2,b/p_3,v'/p_3,v'/n_1,a'/n_1,a'/n_2,b'/n_2,b'/n_3,v/n_3}{
\path[edge] (\x) -- (\y);}

\foreach \x/\y in {n_1/n_2,n_2/n_3,n_3/p_1,p_1/p_2,p_2/p_3,p_3/n_1}{
\path[edge, dotted] (\x) -- (\y);}

\draw[edge] plot [smooth,tension=1] coordinates{(v')(-4.5,2.5)(b')};
\draw[edge] plot [smooth,tension=1] coordinates{(v')(4.5,2.5)(a)};
\end{scope}

\node[ver] () at (-6,-5.5){$\lk(u,\Delta)$};
\node[ver] () at (6,-5.5){$\lk(u,\Delta')$};
\end{tikzpicture}
\caption{$\lk(u,\Delta)$ and $\lk(u,\Delta')$ in $\Delta$ and $\Delta'$ respectively.} \label{fig:lk(u)}
\end{figure}

Here the paths $P(n_1,\dots,n_2)$ and $P(p_1,\dots,p_2)$ do not intersect but the paths $P(n_1,\dots,n_2)$ and $P(p_2,\dots,p_3)$ may intersect. Similarly, the paths $P(n_2,\dots,n_3)$ and $P(p_2,\dots,p_3)$ do not intersect but the paths $P(n_2,\dots,n_3)$ and $P(p_1,\dots,p_2)$ may intersect. By Lemma \ref{lemma:missingtetra1}, a small neighborhood of $|\partial(abu)|$ in $|\lk(v,\Delta)|$ is also a M\"{o}bius strip. Thus,  a portion of $\lk(v,\Delta)$ is as in Figure \ref{fig:lk(v)}.

\begin{figure}[ht]
\tikzstyle{ver}=[]
\tikzstyle{vertex}=[circle, draw, fill=black!50, inner sep=0pt, minimum width=2pt]
\tikzstyle{edge} = [draw,thick,-]
\centering
\begin{tikzpicture}[scale=0.6]
\begin{scope}[shift={(-6,0)}]
\foreach \x/\y in {90/u',150/a',210/b'}{
    \node[vertex] (\y) at (\x:4){};
}

\foreach \x/\y in {90/u,150/a,210/b}{
\node[ver] () at (\x:4.5){$\y$};
}

\foreach \x/\y in {270/u,330/a,30/b}{
\node[ver] () at (\x:4.5){$\y$};
    \node[vertex] (\y) at (\x:4){};
}

\foreach \x/\y in {u/a,a/b,b/u',u'/a',a'/b',b'/u}{
\path[edge] (\x) -- (\y);}

\foreach \x/\y in {300/p_1,0/p_4,60/p_3,120/n_1,180/n_4,240/n_3}{
\node[ver] () at (\x:2){$\y$};
    \node[vertex] (\y) at (\x:2.5){};
}

\foreach \x/\y in {u/p_1,a/p_1,a/p_4,b/p_4,b/p_3,u'/p_3,u'/n_1,a'/n_1,a'/n_4,b'/n_4,b'/n_3,u/n_3}{
\path[edge] (\x) -- (\y);}

\foreach \x/\y in {n_1/n_4,n_4/n_3,n_3/p_1,p_1/p_4,p_4/p_3,p_3/n_1}{
\path[edge, dotted] (\x) -- (\y);}
\end{scope}

\begin{scope}[shift={(6,0)}]
\foreach \x/\y in {90/u',150/a',210/b'}{
    \node[vertex] (\y) at (\x:4){};
}

\foreach \x/\y in {90/u,150/a^-,210/b^-}{
\node[ver] () at (\x:4.5){$\y$};
}

\foreach \x/\y in {270/u,330/a,30/b}{
    \node[vertex] (\y) at (\x:4){};
}

\foreach \x/\y in {270/u,330/a^+,30/b^+}{
\node[ver] () at (\x:4.5){$\y$};
}

\foreach \x/\y in {u/a,a/b,b/u',u'/a',a'/b',b'/u}{
\path[edge] (\x) -- (\y);}

\foreach \x/\y in {300/p_1,0/p_4,60/p_3,120/n_1,180/n_4,240/n_3}{
\node[ver] () at (\x:2){$\y$};
    \node[vertex] (\y) at (\x:2.5){};
}

\foreach \x/\y in {u/p_1,a/p_1,a/p_4,b/p_4,b/p_3,u'/p_3,u'/n_1,a'/n_1,a'/n_4,b'/n_4,b'/n_3,u/n_3}{
\path[edge] (\x) -- (\y);}

\foreach \x/\y in {n_1/n_4,n_4/n_3,n_3/p_1,p_1/p_4,p_4/p_3,p_3/n_1}{
\path[edge, dotted] (\x) -- (\y);}

\draw[edge] plot [smooth,tension=1] coordinates{(u')(-4.5,2.5)(b')};
\draw[edge] plot [smooth,tension=1] coordinates{(u')(4.5,2.5)(a)};
\end{scope}

\node[ver] () at (-6,-5.5){$\lk(v,\Delta)$};
\node[ver] () at (6,-5.5){$\lk(v,\Delta')$};
\end{tikzpicture}
\caption{$\lk(v,\Delta)$ and $\lk(v,\Delta')$ in $\Delta$ and $\Delta'$ respectively.} \label{fig:lk(v)}
\end{figure}
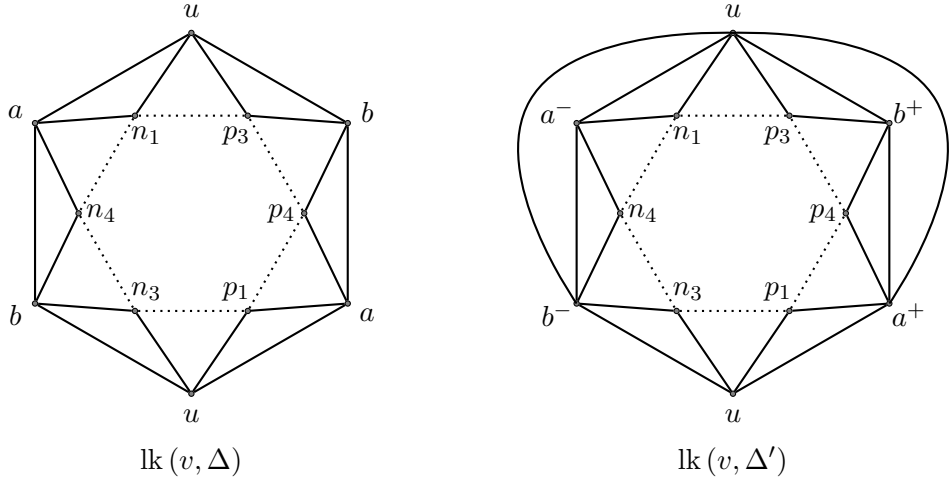

Now,  the vertices $a,b$ (in the link of $u$ and $v$ in $\Delta$) connected with $p_i$ will be replaced by $a^+,b^+$ in the link of $u$ and $v$ in $\Delta'$ and the vertices $a,b$ (in the link of $u$ and $v$ in $\Delta$) connected with $n_i$ in the will be replaced by $a^-,b^-$ in the link of $u$ and $v$ in $\Delta'$ (see Figures \ref{fig:lk(u)} and \ref{fig:lk(v)}). Since $a^+b^+uv$ and $a^-b^-uv$ are new facets in $\Delta'$, links of $u$ and $v$ in $\Delta'$ are closed connected surface with  $b_1(\lk(u,\Delta'))=b_1(\lk(u,\Delta))-1$ and $b_1(\lk(v,\Delta'))=b_1(\lk(v,\Delta))-1$. Thus all the vertex-links in $\Delta'$ are connected surfaces. It is easy to see that $\Delta'$ is connected and  hence it is a normal pseudomanifold.
\end{proof}

\noindent   If $\Delta = \Delta'^\psi_{ef},$ then we call $\Delta'$ an {\bf edge unfolding} of $\Delta.$

What happens if in the above lemma if we assume that a neighborhood of $|\partial abv|$ in the link of $u$ is an annulus?  The proof of the lemma is unchanged until we reach Figures \ref{fig:lk(u)} and \ref{fig:lk(v)}.  Instead the link of $u$ in $\Delta$   can be seen as a   triangulation of a surface with two identified boundary components labeled by $C(a,b,v).$  This leads to a diagram depicting the link of $u$ in $\Delta'$ as being a  triangulation of a surface with two boundary components labeled by $C(a^+,b^+,v)$ and $C(a^-,b^-,v),$ and then filling in these two circles with triangles $a^+b^+v, a^-b^-v.$  A similar description holds for the link of $v$ in $\Delta'.$  Thus the link of $u$ (and $v$) is a  triangulation of a  surface with a pair of vertices identified.  In this case the unfolding of $\Delta$ is not a normal pseudomanifold.  However, as we will see in Lemma \ref{no outside vertices}, the unfolding of $\Delta$ will have enough similarities to a normal pseudomanifold for our purposes.

\section{Rigidity} \label{rigidity}

In \cite{Kalai} Kalai introduced the idea of using infinitesimal generic rigidity of graphs as a tool for studying face enumeration.  Since this innovative and influential paper rigidity has become part of the tool box available for analyzing $f$-vectors, especially  $g_2.$  See, for instance, \cite{NevoNovinsky},\cite{Zheng}. Here we present the notation and results we need from this subject.   

For now $G$ is a fixed simple graph (no loops or multiple edges) with vertex set $V$ and edge set $E.$    
Let $f:V \to \RR^d.$    We say  $(G,f)$ is {\bf rigid} if there exists $\varepsilon>0$ such that if $g:V \to \RR^d$ with $||f(v)-g(v)||< \varepsilon$ for all $v \in V$ and $||f(v)-f(u)||=||g(v)-g(u)||$ for all $uv \in E,$ then $||f(u)-f(v)||=||g(u)-g(v)||$ for all $u,v \in V.$  In other words, maps close $f$ which preserve distances between edges of $G$ preserve  distances between all vertices.  

\begin{definition}\cite{Kalai}
If for generic choices of $f: V \to \RR^d,~(G,f)$ is rigid, then $G$ is called  {\bf generically $d$-rigid.}  A simplicial complex $\Delta$ is $d$-rigid if $G(\Delta)$ is generically $d$-rigid.
\end{definition}
\noindent   Here generic means that there is a (nonempty) Zariski open subset of $(\RR^d)^V$ all of whose corresponding  $f$'s  satisfy the definition.

Note that it is immediate from the definition that if $G$ is generically $d$-rigid and $e$ is an edge whose vertices are in $G,$ but $e \notin G,$ then $G \cup \{e\}$ is generically $d$-rigid.  The following lemma is at the core of the next theorem.

\begin{lemma} \label{union lemma} (Union lemma) \cite{Kalai}
If $G_1$ and $G_2$ are generically $d$-rigid and $G_1 \cap G_2$ contains a $K_d$, then $G_1 \cup G_2$ is $d$-rigid.  Consequently, if $\Delta_1$ and $\Delta_2$ are $d$-rigid and $\Delta_1 \cap \Delta_2$ contains a $(d-1)$-simplex, then $\Delta_1 \cup \Delta_2$ is  $d$-rigid. 
\end{lemma}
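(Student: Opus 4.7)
The plan is to use the standard equivalence between generic rigidity and generic infinitesimal rigidity: a graph $G$ is generically $d$-rigid if and only if, for generic $f: V \to \RR^d$, the only maps $m: V \to \RR^d$ satisfying $\langle m(u)-m(v),\, f(u)-f(v)\rangle = 0$ for all $uv \in E$ (so-called infinitesimal motions) are the trivial motions $m(v) = A f(v) + t$ with $A$ a skew-symmetric $d\times d$ matrix and $t \in \RR^d$. I would choose a generic $f: V_1 \cup V_2 \to \RR^d$; its restrictions to $V_1$ and $V_2$ are then generic, and the $d$ vertices forming the common $K_d$ in $G_1 \cap G_2$ are sent by $f$ to an affinely independent set in $\RR^d$.

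Given an infinitesimal motion $m$ of $(G_1 \cup G_2, f)$, its restriction to $V_i$ is an infinitesimal motion of $(G_i, f|_{V_i})$, hence trivial by the hypothesized generic $d$-rigidity of $G_i$. So there exist skew-symmetric matrices $A_1, A_2$ and vectors $t_1, t_2 \in \RR^d$ with $m(v) = A_i f(v) + t_i$ for $v \in V_i$. At the $d$ common vertices $v_1, \ldots, v_d$ of the $K_d$, equating the two expressions gives $(A_1 - A_2)(f(v_j) - f(v_1)) = 0$ for $j = 2, \ldots, d$. By genericity, the vectors $\{f(v_j) - f(v_1)\}_{j=2}^{d}$ span a $(d-1)$-dimensional subspace, so $A_1 - A_2$ has rank at most $1$. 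Since every skew-symmetric matrix has even rank, we conclude $A_1 = A_2$, and then $t_1 = t_2$. Therefore $m$ is a trivial motion on all of $V_1 \cup V_2$, proving that $G_1 \cup G_2$ is generically $d$-rigid. The simplicial complex statement is then immediate, since $G(\Delta_1) \cap G(\Delta_2) = G(\Delta_1 \cap \Delta_2)$ contains the $K_d$ coming from the $(d-1)$-simplex in $\Delta_1 \cap \Delta_2$.

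The delicate step is the parity argument: vanishing of $A_1 - A_2$ on a $(d-1)$-dimensional subspace only forces its rank to be at most $1$, but being skew-symmetric rules out rank $1$. Without this observation, one could imagine the two local trivial motions differing by a rank-$1$ skew piece, and the argument would break. The other point worth flagging, rather than a real obstacle, is the appeal to the equivalence between generic rigidity (a metric condition on nearby realizations) and generic infinitesimal rigidity (a linear-algebraic condition on the rigidity matrix); this is standard but not self-evident from the definition given in the paper, and the proof silently relies on it.
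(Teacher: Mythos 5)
Your proof is correct; the paper itself gives no argument for this lemma (it is quoted from Kalai), and your infinitesimal-rigidity gluing argument --- restricting a motion to each piece, matching the two trivial motions on $d$ affinely independent common points, and using the even-rank property of skew-symmetric matrices to force $A_1=A_2$ --- is exactly the standard proof found in the cited sources. The only points you should make explicit are routine: a generic $f$ on $V_1\cup V_2$ can simultaneously be chosen so that both restrictions are generic (finite intersection of dense Zariski-open sets) and so that the $d$ shared vertices are affinely independent, and the passage between generic rigidity and generic infinitesimal rigidity is the Asimow--Roth equivalence, which you correctly flag.
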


\begin{theorem}  \cite{Fogelsanger},\cite{Kalai}
Normal $d$-pseudomanifolds are  $(d+1)$-rigid if $d \ge 2.$
\end{theorem}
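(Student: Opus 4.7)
The plan is to induct on $d \ge 2$. The base case $d=2$ asks that the graph of every triangulation of a connected compact surface is generically $3$-rigid. For the sphere this is classical (Gluck, building on Cauchy); for surfaces of higher genus or with nonorientable components this is precisely the main content of Fogelsanger's thesis. I would cite this as a known fact rather than reprove it.

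For the inductive step, assume every normal $(d-1)$-pseudomanifold is $d$-rigid and let $\Delta$ be a normal $d$-pseudomanifold with $d \ge 3$. For any vertex $v$, the link $\lk v$ is a normal $(d-1)$-pseudomanifold, so by the inductive hypothesis its graph is generically $d$-rigid. The standard cone lemma in rigidity theory (adding a new vertex joined to every vertex of a generically $d$-rigid graph produces a generically $(d+1)$-rigid graph) then implies that $G(\Star v)$, which is the cone of $G(\lk v)$ with apex $v$, is generically $(d+1)$-rigid.

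To finish, glue stars along edges using Lemma \ref{union lemma}. For any edge $uv \in E(\Delta)$, the intersection $\Star u \cap \Star v$ contains $uv * \lk(uv, \Delta)$. Since $d \ge 3$, the link $\lk(uv, \Delta)$ is a normal $(d-2)$-pseudomanifold of positive dimension, hence nonempty, so $\Star u \cap \Star v$ contains at least one facet of $\Delta$ and therefore a $K_{d+1}$ subgraph. The Union Lemma then gives that $G(\Star u) \cup G(\Star v)$ is $(d+1)$-rigid. Because $\Delta$ is connected and its $1$-skeleton spans every vertex, iterating this merging along a spanning tree of $G(\Delta)$ (adding one star at a time, each intersecting the accumulated subcomplex in a shared edge $uv$ and hence in a facet) shows that $G(\Delta)$ is generically $(d+1)$-rigid.

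The only genuinely hard step is the base case $d=2$, where the obstructions posed by handles and crosscaps prevent the naive Union Lemma argument from closing up the surface; this is precisely what Fogelsanger's thesis resolves. The inductive step is a routine application of the cone lemma together with Lemma \ref{union lemma}.
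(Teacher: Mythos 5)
Your proof is correct and is essentially the standard argument from the sources the paper cites for this theorem (the paper itself states it without proof): Fogelsanger's thesis supplies the genuinely hard base case $d=2$, and the inductive step via the Cone Lemma applied to vertex links followed by the Union Lemma applied to stars glued along facets is exactly Kalai's original induction. The only nuance worth noting is that Fogelsanger actually proves generic rigidity of minimal cycle complexes directly in all dimensions $d\ge 2$, so the full theorem already follows from his result without induction; your hybrid (Fogelsanger for surfaces, Kalai's induction above) is nonetheless a valid and common way to organize the proof.
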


 One of the main reasons rigidity has been a useful notion in the study of edge enumeration is the use of stresses.  Let $\Delta$ be  $d$-rigid.
\begin{definition}  \cite{Kalai}
Let $f:V(\Delta) \to \RR^d$ be a function.  A {\bf stress} of $f$ is a function $\omega:E(\Delta) \to \RR$ such that for every vertex $v \in V,$
$$\displaystyle\sum_{vu \in E} \omega(vu) (f(v) -f(u)) = 0.$$
The set of all stresses of $f$ is an $\RR$-vector space which we denote by $\mathcal{S}(\Delta_f).$
\end{definition}

\begin{proposition} \label{stresses=g2}  \cite{Kalai}
If $\dim \Delta=d,$ $d \ge 2, \Delta$ is  $(d+1)$-rigid and $f$ is a generic map from $V$ to $\RR^{d+1},$  then $\dim \mathcal{S}(\Delta_f) = g_2(\Delta).$ 
\end{proposition}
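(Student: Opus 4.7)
The plan is to translate the statement about the stress space into a statement about the rank of the rigidity matrix and then read off the answer. Fix a generic $f: V \to \RR^{d+1}$ and form the rigidity matrix $R_f$ whose rows are indexed by $E(\Delta)$ and whose columns are indexed by pairs $(v,i)$ with $v\in V$ and $1 \le i \le d+1$. The row indexed by the edge $uv$ has the vector $f(u)-f(v)$ placed in the $u$-block, the vector $f(v)-f(u)$ placed in the $v$-block, and zeros elsewhere. Directly from the definition of a stress, $\mathcal{S}(\Delta_f)$ is exactly the left kernel of $R_f$, so
\[
\dim \mathcal{S}(\Delta_f) \; = \; f_1 - \mathrm{rank}(R_f).
\]

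Next I would analyze the right kernel of $R_f$, whose elements assign a velocity $\dot f(v)\in\RR^{d+1}$ to each vertex satisfying $(\dot f(u)-\dot f(v))\cdot (f(u)-f(v))=0$ for every edge $uv$; these are the infinitesimal motions of the framework $(G(\Delta),f)$. The infinitesimal rigid motions, obtained by differentiating the action of the Euclidean group, always lie in this kernel. Because $f$ is generic and $\Delta$ is a normal $d$-pseudomanifold with $d\ge 2$ (so in particular $f_0\ge d+2$), the image $f(V)$ affinely spans $\RR^{d+1}$, and hence these rigid motions form a subspace of dimension $(d+1)+\binom{d+1}{2}=\binom{d+2}{2}$.

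Now I would invoke the standard equivalence between generic rigidity and generic infinitesimal rigidity: at a generic $f$, the framework $(G(\Delta),f)$ is rigid in the sense used in the paper if and only if every infinitesimal motion is rigid, i.e.\ the right kernel of $R_f$ coincides with the space of infinitesimal rigid motions. This is obtained from the implicit function theorem applied to the edge-length map $\RR^{(d+1)f_0}\to \RR^{f_1}$, combined with Sard's theorem to pass from a full-measure set of regular values to a Zariski-open set of generic $f$. Assuming $(d+1)$-rigidity of $\Delta$, we therefore get $\mathrm{rank}(R_f)=(d+1)f_0-\binom{d+2}{2}$, and substitution yields
\[
\dim \mathcal{S}(\Delta_f)= f_1-(d+1)f_0+\binom{d+2}{2}=g_2(\Delta).
\]

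The main obstacle is the passage from the distance-preserving definition of rigidity (which involves nearby maps $g:V\to\RR^{d+1}$) to the purely linear-algebraic statement about $\ker R_f$ at a generic $f$. Once this equivalence is in hand, the theorem is essentially a dimension count, with the only auxiliary check being that $f_0$ is large enough for the infinitesimal rigid motions to be linearly independent; this is automatic under the running hypotheses on $\Delta$.
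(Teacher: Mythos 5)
The paper does not prove this proposition itself---it is quoted from Kalai---but your argument is the standard one underlying that citation: stresses form the left kernel of the rigidity matrix, the infinitesimal rigid motions span a $\binom{d+2}{2}$-dimensional subspace of the right kernel once $f(V)$ affinely spans $\RR^{d+1}$, and the Asimow--Roth equivalence of generic rigidity with generic infinitesimal rigidity pins down the rank so that rank--nullity gives $f_1-(d+1)f_0+\binom{d+2}{2}=g_2(\Delta)$. This is correct and essentially the same approach as the cited source.
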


One of the advantages of using the stress space to estimate $g_2$ is that it is frequently possible to estimate it from local conditions instead of the  global definition of $g_2.$  The {\bf support} of a stress $\omega$ is the set of edges $uv$ such that $\omega(uv) \neq 0.$ 

\begin{lemma} \label{cone lemma} (Cone lemma) \cite{TayWhiteWhiteley},\cite{NevoNovinsky}
Suppose $\Delta$ is  $d$-rigid.  Let $C(\Delta)$ be the cone of $\Delta$ with cone vertex $v.$  Then $C(\Delta)$ is $(d+1)$-rigid.  Furthermore, if $g_2>0,$ then for generic $f:V(C(\Delta)) \to \RR^{d+1}$ there is a stress in $\mathcal{S}(C(\Delta)_f)$ whose support  contains an edge incident to $v.$ 
\end{lemma}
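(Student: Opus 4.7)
The plan is to establish both claims through the coning correspondence between the stress spaces of $\Delta$ in $\mathbb{R}^d$ and of $C(\Delta)$ in $\mathbb{R}^{d+1}$. For rigidity, I would place the vertices of $C(\Delta)$ generically in $\mathbb{R}^{d+1}$ and write the rigidity matrix of $(C(\Delta),f)$ in block form with rows $R_\Delta$ indexed by edges of $\Delta$ and rows $R_v$ indexed by the $|V(\Delta)|$ edges incident to $v$. A direct row-space analysis — place $v$ at the origin so that the $R_v$-rows are essentially the $|V(\Delta)|$ vectors $f(v)-f(u)$ spread across the $u$-blocks of columns — shows the $R_v$-rows contribute $|V(\Delta)|$ independent rows beyond the row-span of $R_\Delta$. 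Since $\Delta$ is $d$-rigid, $\mathrm{rank}(R_\Delta)=d|V(\Delta)|-\tbinom{d+1}{2}$, and so $\mathrm{rank}(R)=(d+1)(|V(\Delta)|+1)-\tbinom{d+2}{2}$, the maximal possible for a rigidity matrix on $|V(\Delta)|+1$ vertices in $\mathbb{R}^{d+1}$. This gives the $(d+1)$-rigidity of $C(\Delta)$ and, by subtracting ranks from edge counts, the coning identity that the generic stress dimension of $C(\Delta)$ in $\mathbb{R}^{d+1}$ equals the generic stress dimension of $\Delta$ in $\mathbb{R}^d$.

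For the second claim I would argue by contradiction. Suppose every $\omega\in\mathcal{S}(C(\Delta)_f)$ satisfies $\omega(vu)=0$ for every $u\in V(\Delta)$. Then the restriction of $\omega$ to $E(\Delta)$ is a stress of $(\Delta,f|_{V(\Delta)})$ viewed as a framework in $\mathbb{R}^{d+1}$, since the vertex-equations at each $u\in V(\Delta)$ reduce exactly to those of $\Delta$; this gives $\dim\mathcal{S}(C(\Delta)_f)\le\dim\mathcal{S}(\Delta\text{ in }\mathbb{R}^{d+1})$. Combined with the coning identity, one obtains the inequality $\dim\mathcal{S}(\Delta\text{ in }\mathbb{R}^d)\le\dim\mathcal{S}(\Delta\text{ in }\mathbb{R}^{d+1})$. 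To contradict it I would invoke a generic dimension-drop argument: the linear map
$$L\colon\mathcal{S}(\Delta\text{ in }\mathbb{R}^d)\to\mathbb{R}^{V(\Delta)},\qquad L_u(\omega)\;=\sum_{v:\,uv\in E(\Delta)}\omega(uv)\bigl(f(u)_{d+1}-f(v)_{d+1}\bigr),$$
has kernel exactly $\mathcal{S}(\Delta\text{ in }\mathbb{R}^{d+1})$, and a short edge-by-edge calculation using the stress equation of $\omega$ at each of its two endpoints shows that the image of $L$ is orthogonal to the $d+1$ vectors $\mathbf{1}$ and $(f(u)_i)_{u\in V(\Delta)}$ for $1\le i\le d$. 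For generic $f$ these $d+1$ vectors are independent in $\mathbb{R}^{V(\Delta)}$, so $\mathrm{rank}(L)=\min\{\dim\mathcal{S}(\Delta\text{ in }\mathbb{R}^d),\,|V(\Delta)|-(d+1)\}$. The hypothesis $g_2>0$ forces $|V(\Delta)|>d+1$ (otherwise $\Delta$ is the boundary of a simplex or contained in one, and $g_2=0$) together with $\dim\mathcal{S}(\Delta\text{ in }\mathbb{R}^d)>0$, so this rank is strictly positive and $\dim\mathcal{S}(\Delta\text{ in }\mathbb{R}^{d+1})<\dim\mathcal{S}(\Delta\text{ in }\mathbb{R}^d)$, contradicting the displayed inequality.

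The technical heart is the dimension-drop calculation of the second step: showing that the image of $L$ lies in a subspace of codimension $d+1$ requires invoking the stress equation of $\omega$ once at each endpoint of every edge, and the resulting cancellation is what produces the linear relations $\sum_u L_u(\omega)=0$ and $\sum_u L_u(\omega)f(u)_i=0$. The block-rank computation for $(d+1)$-rigidity in the first step (essentially Whiteley's coning theorem) is standard in combinatorial rigidity but also needs careful bookkeeping; once both are in hand, the contradiction with the contrary assumption — that no stress of $C(\Delta)_f$ has a $v$-incident edge in its support — is immediate.
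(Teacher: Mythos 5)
The paper does not prove this lemma itself --- it is imported from Tay--White--Whiteley and Nevo--Novinsky --- so there is no internal proof to compare against. Your overall strategy (Whiteley's coning identity for the first claim, then a contradiction via a dimension-drop for the lift of $\Delta$ from $\RR^d$ to $\RR^{d+1}$) is the standard route from those sources, and several pieces are correct: a cone stress vanishing on all cone edges does restrict to a stress of $\Delta$ in $\RR^{d+1}$, and the orthogonality of $\mathrm{im}(L)$ to $\mathbf{1}$ and to the coordinate vectors $(f(u)_i)_u$ does follow from the stress equations exactly as you say. But there are two genuine gaps. First, the block-rank computation is internally inconsistent with your own second step. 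With the base vertices \emph{generic} in $\RR^{d+1}$, the submatrix $R_\Delta$ is the rigidity matrix of $\Delta$ as a framework in $\RR^{d+1}$, whose rank is the generic $(d+1)$-dimensional rank of $\Delta$; by the very dimension-drop you exploit later, this strictly exceeds $d|V(\Delta)|-\binom{d+1}{2}$ whenever $g_2>0$ and $|V(\Delta)|>d+1$. Since $\mathrm{rank}(R)\le(d+1)(|V(\Delta)|+1)-\binom{d+2}{2}=|V(\Delta)|+d|V(\Delta)|-\binom{d+1}{2}$, the cone rows then add strictly fewer than $|V(\Delta)|$ independent rows beyond $R_\Delta$, so the count as stated fails. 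Whiteley's actual proof puts the base vertices on an affine hyperplane missing the cone point (where $\mathrm{rank}(R_\Delta)$ really is the $d$-dimensional rank and the cone rows really do add $|V(\Delta)|$ independent rows) and then invokes projective invariance of the generic rigidity matroid to return to generic position; that transfer is the step your sketch omits.

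Second, the inference ``these $d+1$ vectors are independent, so $\mathrm{rank}(L)=\min\{\dim\mathcal{S}(\Delta\text{ in }\RR^d),\,|V(\Delta)|-(d+1)\}$'' is a non sequitur: independence of $\mathbf{1}$ and the coordinate vectors only yields the upper bound $\mathrm{rank}(L)\le|V(\Delta)|-(d+1)$, while the asserted equality is a much stronger genericity statement that you neither justify nor need. All the contradiction requires is $\mathrm{rank}(L)>0$, i.e.\ that $L$ is not identically zero on the stress space for generic $f$, and this needs its own argument. One way: for a fixed nonzero $d$-stress $\omega$, the map $h\mapsto L_h(\omega)$ is linear in the lifting heights $h=f(\cdot)_{d+1}$, and taking $h=\delta_{u_0}$ gives $L_{h,u}(\omega)=-\omega(uu_0)$ at each neighbor $u$ of $u_0$; hence if $L_h(\omega)=0$ for all $h$ then $\omega=0$. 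So some lift strictly decreases the stress-space dimension, and upper semicontinuity of that dimension transfers the strict drop to generic $f$. With that nondegeneracy argument supplied (and the coning identity repaired as above), your contradiction goes through.
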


A stress $\omega$ on a subcomplex of $\Delta$ can be extended to $\Delta$ by simply setting $\omega(uv)=0$ for any $uv$ not in the subcomplex.  Since $g_2(C(\Delta)) =g_2(\Delta),$ Proposition \ref{stresses=g2} and the cone lemma imply the following lower bounds  for $g_2$  originally due to Kalai.

\begin{lemma} \label{g2 subcomplex < g2 complex}
  Suppose $\Delta$ is $d$-dimensional, $(d+1)$-rigid and $\Omega$ is a $(d+1)$-rigid subcomplex of $\Delta.$ Then $g_2(\Omega) \le g_2(\Delta).$  
\end{lemma}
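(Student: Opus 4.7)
The plan is to interpret both $g_2(\Omega)$ and $g_2(\Delta)$ as dimensions of stress spaces via Proposition \ref{stresses=g2}, then show stresses on $\Omega$ inject into stresses on $\Delta$ by extension with zeros. Fix a generic $f:V(\Delta)\to\RR^{d+1}$; its restriction $f|_{V(\Omega)}$ is then generic as well. If $\omega\in\mathcal{S}(\Omega_{f|_\Omega})$, define $\tilde\omega:E(\Delta)\to\RR$ by $\tilde\omega(e)=\omega(e)$ for $e\in E(\Omega)$ and $\tilde\omega(e)=0$ otherwise. At any vertex $v\in V(\Omega)$, the stress condition for $\tilde\omega$ coincides with that for $\omega$ because the extra summands vanish; at any $v\in V(\Delta)\setminus V(\Omega)$, every summand is zero. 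Thus $\omega\mapsto\tilde\omega$ is a linear injection $\mathcal{S}(\Omega_{f|_\Omega})\hookrightarrow\mathcal{S}(\Delta_f)$, and Proposition \ref{stresses=g2} applied to $\Delta$ yields $\dim\mathcal{S}(\Delta_f)=g_2(\Delta)$.

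To identify $\dim\mathcal{S}(\Omega_{f|_\Omega})$ with $g_2(\Omega)$, I need $\Omega$ to sit in the hypothesis of Proposition \ref{stresses=g2}, i.e.\ to be $d$-dimensional. If $\dim\Omega=e<d$, the plan is to iteratively cone: form the $(d-e)$-fold cone $C^{d-e}(\Omega)$ inside an enlarged complex $\Delta^\ast$ obtained by attaching the same $d-e$ cone vertices to $\Delta$. The cone lemma and union lemma together ensure that $C^{d-e}(\Omega)$ is $(d+1)$-rigid as a subcomplex of the $(d+1)$-rigid $\Delta^\ast$, and the identity $g_2(C(\cdot))=g_2(\cdot)$ applied repeatedly gives $g_2(C^{d-e}(\Omega))=g_2(\Omega)$ and $g_2(\Delta^\ast)=g_2(\Delta)$. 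So there is no loss of generality in assuming $\dim\Omega=d$ from the outset.

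With $\dim\Omega=d$ and $\Omega$ known to be $(d+1)$-rigid, Proposition \ref{stresses=g2} gives $\dim\mathcal{S}(\Omega_{f|_\Omega})=g_2(\Omega)$, and combining with the injection above yields
\[
g_2(\Omega)=\dim\mathcal{S}(\Omega_{f|_\Omega})\le\dim\mathcal{S}(\Delta_f)=g_2(\Delta),
\]
as required. The only genuinely delicate point I anticipate is the cone step: one must check that a \emph{generic} map $f:V(\Delta^\ast)\to\RR^{d+1}$ restricts to a generic map on $V(\Omega)$ (and on $V(C^{d-e}(\Omega))$), so that Proposition \ref{stresses=g2} can legitimately be applied to each piece with the \emph{same} set of coordinates. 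This is handled by noting that the Zariski-open condition on $V(\Delta^\ast)$ projects to a Zariski-open condition on $V(\Omega)$, so a single generic choice works for both complexes simultaneously.
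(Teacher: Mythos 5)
Your argument is correct and is essentially the paper's own (the paper only sketches it, via the remark immediately preceding the lemma that a stress on a subcomplex extends to $\Delta$ by setting it to zero on the remaining edges, combined with Proposition \ref{stresses=g2}). The extra coning step you add to handle $\dim\Omega<d$ is more care than the paper takes --- in every application $\Omega$ is $d$-dimensional, and in any case Proposition \ref{stresses=g2} is really a statement about $(d+1)$-rigid graphs in $\RR^{d+1}$, so it applies to $\Omega$ directly once $g_2(\Omega)$ is read with the same parameter $d$.
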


\begin{lemma} \label{g2>face} \cite[Theorem 7.3]{Kalai}
If $\Delta$ is normal $d$-pseudomanifold with $d \ge 3$ and $\sigma$ is a face of $\Delta$ of codimension three or more, then $g_2(\Delta) \ge g_2(\Star \sigma)=g_2(\lk \sigma).$  
\end{lemma}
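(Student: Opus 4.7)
The plan is to combine the two main tools already developed in Section \ref{rigidity}: the cone lemma and Lemma \ref{g2 subcomplex < g2 complex}. The strategy is to show that $\Star\sigma$ is a $(d+1)$-rigid subcomplex of $\Delta$ and to verify the equality $g_2(\Star\sigma) = g_2(\lk\sigma)$ by a direct $f$-vector calculation.

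First, I would observe that $\Star \sigma = \sigma \ast \lk \sigma$, the simplicial join. Since $\Delta$ is a normal $d$-pseudomanifold and codim$(\sigma)\ge 3$, the excerpt already records that $\lk\sigma$ is a normal pseudomanifold of dimension $e := d - \dim\sigma - 1 \ge 2$. By the Fogelsanger--Kalai rigidity theorem quoted above, $\lk\sigma$ is $(e+1)$-rigid. Now form $\Star\sigma$ from $\lk\sigma$ by iteratively coning off each of the $\dim\sigma + 1$ vertices of $\sigma$. Each application of the cone lemma raises the rigidity parameter by one, so after all $\dim\sigma+1$ cones we conclude that $\Star\sigma$ is $\bigl((e+1)+(\dim\sigma+1)\bigr)$-rigid, and this is exactly $(d+1)$-rigid since $e + \dim\sigma + 1 = d$.

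Because $\Star\sigma$ is a $(d+1)$-rigid subcomplex of the $(d+1)$-rigid complex $\Delta$, Lemma \ref{g2 subcomplex < g2 complex} gives the key inequality $g_2(\Star\sigma) \le g_2(\Delta)$. It remains to identify $g_2(\Star\sigma)$ with $g_2(\lk\sigma)$. For any complex $\Gamma$ of dimension $k\ge 2$, a short computation using $f_0(C\Gamma)=f_0(\Gamma)+1$ and $f_1(C\Gamma)=f_1(\Gamma)+f_0(\Gamma)$ yields
\[
g_2(C\Gamma) = g_2(\Gamma) + \binom{k+3}{2} - (k+2) - \binom{k+2}{2} = g_2(\Gamma),
\]
so coning preserves $g_2$ in these dimensions. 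Iterating this through the cone construction that builds $\Star\sigma$ from $\lk\sigma$ (each intermediate complex still has dimension at least $2$ because $e\ge 2$) gives $g_2(\Star\sigma) = g_2(\lk\sigma)$, completing the argument.

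The only place where care is needed is the bookkeeping: one must verify that the dimension of $\lk\sigma$ is at least $2$ (which is exactly the codimension-three hypothesis), so that Fogelsanger--Kalai applies and so that each coning step in the $g_2$-preservation calculation occurs in dimension $\ge 2$. Aside from this check, the proof is essentially a matching of indices: each cone simultaneously increases both the ambient rigidity dimension and the dimension of the complex by one, and this is precisely what is required for $\Star\sigma$ to land in the rigidity regime where Lemma \ref{g2 subcomplex < g2 complex} can be invoked.
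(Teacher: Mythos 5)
Your proof is correct, and it is essentially the derivation the paper intends: the paper states Lemma \ref{g2>face} without proof as a citation to Kalai, but the surrounding text explicitly frames such lower bounds as consequences of the cone lemma, the fact that coning preserves $g_2$, and Lemma \ref{g2 subcomplex < g2 complex}, which is precisely how you assemble it. Your index bookkeeping (each of the $\dim\sigma+1$ cones raising the rigidity parameter from $e+1$ to $d+1$) and the $f$-vector verification that $g_2(C\Gamma)=g_2(\Gamma)$ are both accurate.
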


\section{Relatively minimal $g_2$}

\noindent The previous lemma naturally leads to the definition of complexes with relatively minimal $g_2.$

\begin{definition}
A simplicial complex $\Delta$ has {\bf relatively minimal $g_2$} if 
  \begin{itemize}
    \item $\Delta$ is a normal pseudomanifold of dimension at least three.
    \item There exists a nonempty face $\sigma$ of codimension at least three such that $g_2(\Delta) = g_2(\lk (\sigma)).$
  \end{itemize} 
\end{definition}

\noindent If we wish to specify $\sigma,$ then we say $\Delta$ has relatively minimal $g_2$ with respect to $\sigma.$ 

 Applications of the relevant formulas from Section \ref{constructions} give several ways to produce complexes with relatively minimal $g_2.$  Here are some examples.  

\begin{proposition} \label{relatively minimal preserving operations} $\mbox{ }$

\begin{enumerate}  

\item  The boundary of a simplex $\sigma, \dim \sigma \ge 4$  has relatively minimal $g_2$ with respect to any face of codimension three or more.
\item  \label{facet subdivision} Suppose $\Delta$ is a normal psuedomanifold of dimension at least three. If $\Delta'$ is obtained from $\Delta$ by subdividing one (or more) facets, then $\Delta'$ has relatively minimal $g_2$ if and only if $\Delta$ has relatively minimal $g_2.$  Since stacked spheres are formed by starting with the boundary of a simplex and then subdividing facets, all stacked spheres ($\dim \ge3$) have relatively minimal $g_2.$

\item \label{relatively minimal via connected sum} Suppose $\Delta = \Delta_1 \#_\psi \Delta_2,$ where $\Delta_1$ and $\Delta_2$ are normal $d$-pseudomanifolds with $d \ge 3.$ If $v$ is not one of the identified vertices, then $v$ is in $\Delta_1$ or $\Delta_2.$ Say $v$ is in  $\Delta_1$.  Then  $\Delta$ has relatively minimal $g_2$ with respect to $v$ if and only if $\Delta_1$ has relatively minimal $g_2$ with respect to $v$ and $\Delta_2$ is a stacked sphere.  Alternatively, assume $v$ is the vertex representing $v_1 \in \Delta_1$ and $\psi(v_1)$ in $\Delta_2.$  Then $\Delta$ has relatively minimal $g_2$ with respect to $v$ if and only if $\Delta_1$ has relatively minimal $g_2$ with respect to $v_1$ and $\Delta_2$ has relatively minimal $g_2$ with respect to $\psi(v_1).$ 
 
\item  Suppose $\Delta$ is a normal psuedomanifold of dimension at least three.  Let $\psi$ be an admissible folding at $x.$  Then $\Delta^\psi_x$ has relatively minimal $g_2$ with respect to $\bar{x}$ if and only if $\Delta$ has relatively minimal $g_2$ with respect to $x.$ 

\item  Suppose $\Delta$ is a normal pseudomanifold of dimension at least two and $v$ is a graph cone point of $\Delta.$  Then $\Sigma_v \Delta$ has relatively minimal $g_2$ with respect to either suspension point. 

\item  Suppose $\Delta$ is a normal pseudomanfiold of dimension at least three and $\psi$ is an edge admissible folding at $uv$ such that $\Delta^\psi_{uv}$ is a normal pseudomanifold.  Then $\Delta^\psi_{uv}$ has relatively minimal $g_2$ with respect to  $\bar{v}$ if and only if $\Delta$ has relatively minimal $g_2$ with respect to  $v.$

\end{enumerate}
\end{proposition}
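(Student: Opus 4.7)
The plan is to treat all six parts as direct consequences of the $g_2$-formulas established in Section \ref{constructions}, combined with Lemma \ref{g2>face}. The unifying observation is that ``$\Delta$ has relatively minimal $g_2$ with respect to $\sigma$'' is equivalent to the difference $g_2(\Delta) - g_2(\lk \sigma)$ being zero, and by Lemma \ref{g2>face} this difference is always nonnegative; so each part amounts either to showing the difference is invariant under the operation in question, or to splitting a sum of nonnegative terms that must vanish.

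First I would dispatch the easy items. Item (1) is immediate because $\partial \sigma$ is a stacked sphere, and every link of a face of codimension $\ge 3$ is again a boundary of a simplex, so both $g_2(\Delta)$ and $g_2(\lk \sigma)$ equal $0$. Item (2) follows because facet subdivision does not change $g_2$ in dimension $\ge 2$, and for a face $\tau$ of codimension $\ge 3$ the link $\lk \tau$ is either unchanged (if $\tau$ is not inside the subdivided facet) or is itself subject to a facet subdivision of a complex of dimension $\ge 2$ (if $\tau$ lies in the subdivided facet). Item (5) is immediate from the identity $g_2(\Sigma_v \Delta) = g_2(\Delta)$ when $v$ is a graph cone point, together with the fact that each suspension point $x$ of $\Sigma_v \Delta$ has link simplicially isomorphic to $\Delta$.

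Items (4) and (6) are parallel and fall out by arithmetic on the increments. For vertex folding, \eqref{folding g2} gives $g_2(\Delta^\psi_x) = g_2(\Delta) + \binom{d+1}{2}$, while \eqref{folding linkx} identifies $\lk(\bar x, \Delta^\psi_x)$ as a handle addition on the $(d-1)$-dimensional $\lk(x,\Delta)$, so \eqref{g_2: handles} bumps $g_2$ of the link by $\binom{(d-1)+2}{2} = \binom{d+1}{2}$ as well. The two increments cancel, giving $g_2(\Delta^\psi_x) - g_2(\lk(\bar x, \Delta^\psi_x)) = g_2(\Delta) - g_2(\lk(x,\Delta))$, which is the desired equivalence. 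Edge folding runs exactly the same way: \eqref{edge folding g2} contributes $\binom{d}{2}$ to $g_2$ of the complex, while \eqref{edge folding u link} (and its analogue for $v$) expresses $\lk(v, \Delta^\psi_{uv})$ as a vertex folding of the $(d-1)$-dimensional $\lk(v,\Delta)$, so \eqref{folding g2} applied one dimension down contributes the matching $\binom{(d-1)+1}{2} = \binom{d}{2}$ to $g_2$ of the link.

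The only place any care is needed is item (3). In the unidentified case $\lk(v,\Delta) = \lk(v,\Delta_1)$, so relative minimality reads $g_2(\Delta_1) + g_2(\Delta_2) = g_2(\lk(v,\Delta_1))$ after applying \eqref{g_2:connected sum}; since $g_2(\Delta_2) \ge 0$ by Walkup/Kalai and $g_2(\Delta_1) - g_2(\lk(v,\Delta_1)) \ge 0$ by Lemma \ref{g2>face}, both nonnegative summands are forced to vanish, giving that $\Delta_2$ is a stacked sphere and $\Delta_1$ is relatively minimal at $v$. In the identified case, the link $\lk(\bar v, \Delta)$ is itself a connected sum $\lk(v_1,\Delta_1) \#_\psi \lk(\psi(v_1),\Delta_2)$, so its $g_2$ again splits additively; the relative-minimality identity then becomes the sum of the two nonnegative terms $g_2(\Delta_i) - g_2(\lk(v_i, \Delta_i))$ being zero, forcing each to vanish. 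I expect no substantive obstacle in this proposition, only careful bookkeeping: the main thing to watch is that \eqref{g_2: handles} and \eqref{folding g2} must be applied one dimension lower when they act on links.
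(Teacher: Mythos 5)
Your proof is correct and is essentially the argument the paper intends: the proposition is stated there without a written proof, only the remark that it follows from the $g_2$-formulas of Section \ref{constructions}, and your bookkeeping with those formulas (the matching increments $\binom{d+1}{2}$ and $\binom{d}{2}$ for the foldings applied one dimension down on the links, and the splitting of a sum of nonnegative differences via Lemma \ref{g2>face} and Fogelsanger's $g_2\ge 0$ for the connected-sum case) supplies exactly the omitted details. The only point worth adding is in the converse direction of item (2): a face of $\Delta'$ containing the new cone vertex has a simplex boundary as its link, which forces $g_2(\Delta')=g_2(\Delta)=0$ and makes that case trivial as well.
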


\noindent  As we will see in Theorem \ref{no outside vertices}, connected sum with a stacked sphere (including the boundary of $4$-simplex)  is a special case of (\ref{facet subdivision}).  In Theorem \ref{two singularities - combinatorics} we will want to refer to these two operations separately.

The above operations already provide a class of three-dimensional complexes with relatively minimal $g_2.$  Recall that, a   pseudocompression body is a connected compression body with each boundary component coned off.  As we will see in moment, all pseudocompression bodies have relatively minimal $g_2$ triangulations.  One of the main results of Section \ref{few singularities} is that all triangulations with relatively minimal $g_2$ and two or fewer singularities  are  triangulations of pseudocompression bodies.

\begin{proposition}
  If $X$ is a pseudocompression body, then there exists triangulations of $X$ which have relatively minimal $g_2$  with respect to $v,$ where $v$ is the cone point for the top part of the boundary of its associated compression body.  
\end{proposition}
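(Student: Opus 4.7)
The plan is to construct a triangulation of $X$ with relatively minimal $g_2$ at $v$ by iterating the operations listed in Proposition \ref{relatively minimal preserving operations} on elementary pieces.

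Let $C$ be the compression body associated to $X$, with bottom surfaces $K_1,\dots,K_l$ and $h \ge l-1$ attached $1$-handles, and let $T$ be the (connected) top surface. For each $i$, pick a triangulation $L_{K_i}$ of $K_i$ in which some vertex $u_i$ is adjacent to every other vertex (always achievable from any triangulation by enough subdivision). Form the one-vertex suspension $\Delta_i := \Sigma_{u_i} L_{K_i}$, a triangulation of $\Sigma K_i$; label its two suspension points $v_i$ (the designated top cone) and $w_i$ (the bottom). By Proposition \ref{relatively minimal preserving operations}(5), each $\Delta_i$ has relatively minimal $g_2$ with respect to $v_i$, and realizes the pseudocompression body of $K_i \times [0,1]$.

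Next, glue the pieces by iterated connected sum at the top cones. Let $\Delta^{(0)} := \Delta_1 \#_{\psi_1} \cdots \#_{\psi_{l-1}} \Delta_l$, where each $\psi_j$ identifies the current top cone vertex with $v_{j+1}$ (so all top cones fuse into a single vertex $v$), and the remaining three vertices of the two identified facets are chosen to be non-singular interior vertices. Admissibility of a connected sum between distinct components is automatic. Applying Proposition \ref{relatively minimal preserving operations}(3) $l-1$ times, $\Delta^{(0)}$ has relatively minimal $g_2$ with respect to $v$, and the connected-sum link formula yields $\lk(v, \Delta^{(0)}) = K_1 \# \cdots \# K_l$. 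Topologically, $\Delta^{(0)}$ is a pseudocompression body with bottom components $K_1,\dots,K_l$ joined by $l-1$ handles.

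Finally, add the remaining $h - (l-1)$ handles by performing successive vertex foldings at $v$. By equation (\ref{folding linkx}), each vertex folding at $v$ acts as a handle addition on $\lk v$, increasing $b_1(\lk v)$ by $2$; the folding bijection can be chosen to produce either an orientable handle or a crosscap pair (cf.\ the six-vertex projective plane as a vertex folding of a stacked $2$-sphere in Section \ref{constructions}). By Proposition \ref{relatively minimal preserving operations}(4) each vertex folding preserves relatively minimal $g_2$ at $v$; the non-$v$ identified vertices have their links combined via connected sum with $S^2$, so they remain non-singular. By appropriate choices, both $b_1$ and the orientability of $\lk v$ can be matched to $T$, so the resulting complex $\Delta$ is a triangulation of the pseudocompression body with parameters $(K_1,\dots,K_l, T)$, hence homeomorphic to $X$.

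The main obstacle is arranging admissibility of each vertex folding: the two identified facets $\sigma_1,\sigma_2 \in \Star v$ must share only $v$ and satisfy the length-two-path condition, requiring that for each vertex $y \in \sigma_1 \setminus \{v\}$ its image $\psi(y)$ shares no common neighbor with $y$ other than $v$. This is handled by first performing enough facet subdivisions inside $\Star v$ (harmless by Proposition \ref{relatively minimal preserving operations}(2)) to achieve combinatorial separation between candidate facets, so that a suitable admissible pair always exists before each fold.
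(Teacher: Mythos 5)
Your construction is correct and is essentially the paper's proof: one-vertex suspensions of the bottom surfaces at graph cone points, connected sums identifying the top cone points to join components, vertex foldings at the fused top cone to add the remaining handles, and facet subdivisions to guarantee admissible facet pairs. The paper organizes this as an induction on the number of attached $1$-handles rather than performing all connected sums first and all foldings second, but the operations and the justification (items (2)--(5) of Proposition \ref{relatively minimal preserving operations}) are the same.
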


 \begin{proof}
 Write $X$ as a connected compression body $C$ with its boundary components coned off.  So $C$ is $K \times [0,1],$ where $K$ is a compact surface, with $1$-handles $H_1, \dots, H_m$ attached to the top of $C.$  We proceed by induction on $m,$ the number of attached $1$-handles.  If $m=0,$ then since $X$ is connected, $K$ is connected and $X$ is the suspension of $K.$  Let $\Delta$ be a triangulation of $K$ with graph cone point $v.$  For $K=S^3,~\Delta$ can be $\partial \Delta^4.$  For nonorientable $K,~\Delta$ can be appropriate connected sums of the $6$-vertex projective plane.  Similarly, for orientable $K$ other than the sphere, $\Delta$ can be appropriate connected sums of the $7$-vertex torus.   Now $\Sigma_v \Delta$ is a triangulation of $X$ with relatively minimal $g_2.$  
 
 For the induction step, let $C'$ be the compression body  obtained after attaching $m-1$ handles to $K \times [0,1]$ and let $X'$ be the corresponding pseudomanifold obtained by coning off each boundary component.     There are two possibilities, $C'$ and $X'$ are connected or $C'$ and $X'$ have two components.  
 
 Suppose $C'$ is connected. The location of the last handle on the top of $C'$ does not change the homeomorphism type of $C$ or $X.$ There is an orientation restriction if the top of $C'$ is an orientable surface.   The induction hypothesis gives us $\Delta'$ a triangulation of $X'$ which has  relatively minimal $g_2$ with respect to $v,$  where $v$ is the cone point for the top of $C'.$  A correctly oriented vertex folding at $v$ which involves no other singular vertices will provide the required triangulation.  If there are no such pairs of tetrahedrons with a correctly oriented admissible bijection, then sufficiently many facet subdivisions of facets containing $v$ can provide the needed tretrahedrons. 
 
 The situation when $C'$ has two components is similar.  Let $C'_1$ and $C'_2$ be the two components of $C'.$  Both components are compression bodies with fewer $1$-handles attached to their tops.  Any boundary connected sum of the tops of the two components results in a compression body homeomorphic to $C.$  The induction hypothesis gives us $\Delta'_1,$  a triangulation of  the pseudocompression body associated to $C'_1$ which has relatively minimal $g_2$ with respect to $v'_1$ and $\Delta'_2,$   a triangulation of  the pseudocompression body associated to $C'_2$ which has relatively minimal $g_2$ with respect to $v'_2.$  A connected sum of the form $\Delta'_1 \#_\psi \Delta'_2,$ where $\psi$ is a bijection between facets such that $\psi(v'_1) = v'_2$ and no other singular vertices are involved produces the desired triangulation.    As before, if no such facets are available, then one can subdivide facets which contain $v'_1$ and/or $v'_2$ as needed.
  \end{proof}
  
When is a multiset of connected compact surfaces the multiset of singularities of a pseudocompression body?  Equivalently, what are the possible multisets of boundary components of a connected compression body.  The emptyset corresponds to three-manifolds.  The following characterization of such nonempty multisets follows from the classification of surfaces.

\begin{proposition} \label{pseudocompression singularities}
Let $\Ss=\{K_1, \dots, K_m\} $ be a nonempty multiset of connected compact surfaces with  $b_1(K_1)\ge \cdots \ge b_1(K_n).$  Then there exists a  triangulation $\Delta$  of a pseudocompression body, such that $\Ss(\Delta)=\Ss$ if and only if

\begin{itemize}
  \item   $\displaystyle\sum^m_{i=1} b_1(K_i)$ is even.
  \item  If $K_1$ is orientable, then all $K_i$ are orientable.
  \item   $b_1(K_1) \ge \displaystyle\sum^m_{i=2} b_1(K_i).$
  \item  If $K_1$ is not orientable, then  $b_1(K_1) > \displaystyle\sum_{K_i \mbox{ orientable}} b_1(K_i).$
\end{itemize}
\end{proposition}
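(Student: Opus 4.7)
The plan is to prove the biconditional by separate necessity and sufficiency arguments, using Euler characteristic bookkeeping together with the classification of compact surfaces.  For a connected compression body $C$ built from $K\times[0,1]$ with $l$ components $K_1^b,\ldots,K_l^b$ and $h$ attached $1$-handles, connectedness of $C$ forces the handle graph on the components of $K$ to be connected (so $h\ge l-1$), and the top $T$ is a single connected surface.  Euler characteristic additivity under $1$-handle attachment gives $\chi(T)=\sum_i\chi(K_i^b)-2h$, hence $b_1(T)=\sum_i b_1(K_i^b)+2(h-l+1)$, so $b_1(T)\ge\sum_i b_1(K_i^b)$ and $T$ realizes the largest $b_1$ among boundary components of $C$.

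For the forward implication, let $\Delta$ triangulate a pseudocompression body with $\Ss(\Delta)=\Ss$.  Because $\Ss$ is nonempty, $T$ is nonspherical (otherwise every $K_i^b$ is a sphere and $\Ss$ is empty), so $T\in\Ss$ with maximal $b_1$; relabel so $T=K_1$.  Condition (3) is then the preceding inequality, and condition (1) follows from $\chi(\partial C)=2\chi(C)$ being even, together with $b_1$ of a sphere being $0$.  For condition (2), a compact $3$-manifold is orientable iff every boundary component is, and nonorientability of any bottom component propagates to the top via handle attachment; so if $K_1=T$ is orientable, every $K_i$ must be orientable.  For condition (4), with $K_1$ nonorientable, split into two subcases: either some bottom $K_i^b$ is nonorientable, in which case $\sum_{\text{all bot}} b_1(K_i^b)>\sum_{\text{orient.\ bot}} b_1(K_i^b)$ upgrades the basic inequality to the required strict one; or every bottom component is orientable, in which case the nonorientability of $T$ must come from a twisted self-loop handle, forcing $h-l+1\ge1$ and hence $b_1(T)\ge\sum b_1(K_i^b)+2>\sum_{K_i\text{ orient.}}b_1(K_i)$.

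For the reverse implication, given $\Ss$ satisfying (1)--(4), construct a connected compression body $C$ with bottom $\{K_2,\ldots,K_m\}$ (augmented by spherical components if convenient).  Attach enough merging $1$-handles to produce a single top, followed by $s:=\tfrac12\bigl(b_1(K_1)-\sum_{i\ge2}b_1(K_i)\bigr)$ self-loop $1$-handles; conditions (1) and (3) guarantee $s$ is a nonnegative integer, and the resulting top has $\chi(T)=\chi(K_1)$.  To get the orientability of $T$ right, invoke condition (2) when $K_1$ is orientable (all handles orientation-preserving), or condition (4) when $K_1$ is nonorientable: either some $K_i$ with $i\ge 2$ is nonorientable and placing it on the bottom forces $T$ nonorientable, or the strict inequality gives $s\ge1$, so at least one self-loop handle can be twisted to produce the required crosscap.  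Since orientability and Euler characteristic determine a compact surface, $T\cong K_1$; coning off each boundary component of $C$ yields a pseudocompression body with singularities $\Ss$, and the previous proposition supplies a triangulation.

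The principal technical difficulty is on the sufficiency side: choosing handle orientations so that the top surface is actually homeomorphic to $K_1$, not merely sharing its Euler characteristic.  Condition (4) is designed exactly to ensure that, when $K_1$ is nonorientable but every other $K_i$ is orientable, there is room for at least one self-loop handle to carry the needed twist.  The argument therefore hinges on a careful case analysis based on (i) the orientability of $K_1$ and (ii) whether $\Ss\setminus\{K_1\}$ contains a nonorientable surface.
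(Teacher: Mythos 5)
Your argument is essentially correct, and it is worth pointing out that the paper does not actually prove this proposition: it is asserted to ``follow from the classification of surfaces,'' so your Euler-characteristic bookkeeping supplies the omitted argument rather than duplicating one. The identity $b_1(T)=\sum_i b_1(K_i^b)+2(h-l+1)$ together with $h\ge l-1$ (connectedness of the handle multigraph) carries conditions (1) and (3), and your observation that the top always attains the maximal $b_1$ justifies taking $K_1=T$; in case of a tie with a bottom component one has $h=l-1$, the handle multigraph is a tree, and the top is then forced to be homeomorphic to that bottom component, so the relabeling is harmless. Two small repairs are needed. First, the claim that a compact $3$-manifold is orientable iff every boundary component is orientable is false in general (delete a ball from a closed nonorientable $3$-manifold); you do not need it, since your second justification --- a nonorientable bottom component persists as a subsurface of the top after removing the handle feet, so its M\"obius band propagates upward --- is correct and sufficient for condition (2). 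Second, in case B of condition (4) the nonorientability of $T$ need not come from a literal self-loop on an original component of $K$; it comes from a twisted handle both of whose feet lie on the same component of the partially built top, i.e.\ a handle closing a cycle in the handle multigraph. Either way the multigraph is connected and contains a cycle, so $h\ge l$ and $h-l+1\ge 1$, which is all your inequality uses. The sufficiency direction is sound: conditions (1) and (3) make $s$ a nonnegative integer, and your case analysis on the source of the crosscap (a nonorientable bottom component, or a twisted cycle-closing handle available because condition (4) forces $s\ge 1$) determines the top up to homeomorphism via $\chi$ and orientability, after which the preceding proposition of the paper provides the triangulation. One caveat concerns the statement rather than your proof: since $\Ss(\Delta)$ is by definition the multiset of \emph{nonspherical} vertex links, the proposition implicitly requires each $K_i$ to be nonspherical; a multiset containing $S^2$ satisfies all four bulleted conditions but is never realized, and your construction tacitly uses $b_1(K_i)\ge 1$ when it concludes that the top and every listed bottom component are singular links.
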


As we will see in the discussion after Theorem \ref{two singularities - homeomorphism} three-dimensional complexes with relatively minimal $g_2$ need not be pseudocompression bodies.  This still leaves the following problem.

\begin{problem}
Suppose $\Delta$ is three-dimensional, has relatively minimal $g_2,$  and $\Ss(\Delta)$ satisfies Proposition \ref{pseudocompression singularities}.  Is $\Delta$ a pseudocompression body?
\end{problem}
  
  One operation which never produces a complex with relatively minimal $g_2$ is handle addition.  

\begin{proposition} \label{no handle addition}
  Let $\Delta$ be a  normal $d$-pseudomanifold, $d \ge 3.$  Suppose $\psi$ is an admissible bijection between facets of $\Delta.$   Then $\Delta^\psi$ does not have relatively minimal $g_2.$
\end{proposition}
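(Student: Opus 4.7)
The plan is to upgrade Lemma \ref{g2>face} to a strict inequality in this situation: for every nonempty face $\sigma$ of $\Delta^\psi$ of codimension at least three, I want to show $g_2(\Delta^\psi) > g_2(\lk \sigma)$. First I would reduce to the case of a vertex by picking any $v \in \sigma$ and iterating Lemma \ref{g2>face} inside the normal pseudomanifold $\lk v$ to obtain $g_2(\lk \sigma) \le g_2(\lk v, \Delta^\psi)$ (when $d=3$ the face $\sigma$ is already a vertex, so no iteration is needed). So it suffices to prove $g_2(\Delta^\psi) > g_2(\lk v,\Delta^\psi)$ for every vertex $v$ of $\Delta^\psi$.

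The vertex set of $\Delta^\psi$ splits into unidentified vertices and identified vertices $\bar{w}$, where $w \in \sigma_1$ and $\psi(w) \in \sigma_2$. For an unidentified $v$ the link is unchanged, $\lk(v,\Delta^\psi) = \lk(v,\Delta)$, and combining Lemma \ref{g2>face} in $\Delta$ with equation (\ref{g_2: handles}) gives $g_2(\lk v,\Delta^\psi) \le g_2(\Delta) < g_2(\Delta) + \binom{d+2}{2} = g_2(\Delta^\psi)$. This disposes of the easy case.

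The real work is for an identified vertex $\bar{w}$. The link formula recorded just after equation (\ref{g_2:connected sum}) gives $\lk(\bar{w},\Delta^\psi) = \lk(w,\Delta)\,\#_\psi\,\lk(\psi(w),\Delta)$, so by that same equation $g_2(\lk \bar{w},\Delta^\psi) = g_2(\lk w,\Delta) + g_2(\lk \psi(w),\Delta)$. Combined with equation (\ref{g_2: handles}), the desired strict inequality reduces to the two-vertex bound
\[ g_2(\Delta) \ge g_2(\lk w,\Delta) + g_2(\lk \psi(w),\Delta), \]
after which the $\binom{d+2}{2}$ term produces the strict gap.

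This two-vertex bound is the main obstacle, and I would establish it using rigidity. Admissibility of $\psi$ forces the graph distance in $\Delta$ between $w$ and $\psi(w)$ to be at least three, so the two vertices share no neighbor and $\Star(w,\Delta)$ and $\Star(\psi(w),\Delta)$ are vertex disjoint. Both stars are $(d+1)$-rigid with $g_2(\Star v) = g_2(\lk v)$ (Fogelsanger's theorem applied to $\lk v$ plus the Cone Lemma). For a generic $f: V(\Delta) \to \RR^{d+1}$, the space $\Ss(\Delta_f)$ contains the extensions-by-zero of $\Ss(\Star(w,\Delta)_f)$ and of $\Ss(\Star(\psi(w),\Delta)_f)$; vertex disjointness of the two stars makes the edge supports of these two subspaces disjoint, so their sum inside $\Ss(\Delta_f)$ is direct. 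Proposition \ref{stresses=g2} then yields the two-vertex bound, and the chain of inequalities above completes the proof.
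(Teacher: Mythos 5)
Your proposal is correct and follows essentially the same route as the paper: split into unidentified versus identified vertices, use the connected-sum link formula for the identified vertex, and obtain the two-vertex bound $g_2(\Delta)\ge g_2(\lk w)+g_2(\lk \psi(w))$ from the fact that admissibility forces $\Star w$ and $\Star \psi(w)$ to be disjoint, so their stress spaces sit as a direct sum inside $\mathcal{S}(\Delta_f)$. The only addition is your explicit reduction from a general face $\sigma$ of codimension at least three to a vertex by iterating Lemma \ref{g2>face}, a step the paper leaves implicit.
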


\begin{proof}
Suppose $g_2(\Delta^\psi) = g_2(\lk v).$ If $v$ is not in the domain or codomain of $\psi,$ then $g_2(\lk(v, \Delta^\psi)=g_2(\lk (v,\Delta)) \le g_2(\Delta) < g_2(\Delta)+\binom{d+1}{2} = g_2(\Delta^\psi).$  Hence $\Delta^\psi$ can not have relatively minimal $g_2$ with respect to $v.$  Alternatively, suppose $\bar{v}$ is the identified vertex in $\Delta^\psi$ of $v$ and $\psi(v).$  So $g_2(\lk (\bar{v},\Delta^\psi))=g_2(\lk (v,\Delta))  + g_2(\lk (\psi(v),\Delta)).$  Since $\psi$ is admissible, $\Star v$ and $\Star \psi(v)$ are disjoint.  Now the cone lemma implies that the dimension of the stress space for generic $f:V(\Delta) \to \RR^{d+1}$  is at least $g_2(\lk v) + g_2(\lk \psi(v)).$  Thus $g_2(\Delta^\psi) = g_2(\Delta)+ \binom{d+1}{2}~ \ge  g_2(\lk v) + g_2(\lk \psi(v))+\binom{d+1}{2} >g_2(\lk v) + g_2(\lk \psi(v)) = g_2(\lk \bar{v}).$
\end{proof}

 The most important property of a complex $\Delta$ with relatively minimal $g_2$ with respect to $\sigma$  is that it follows immediately from the cone lemma and Proposition \ref{stresses=g2} that all of its generic stresses must have support contained in $\Star \sigma.$  
The next two  lemmas use this and will allow us to reduce the problem of studying three-dimensional complexes with relatively minimal $g_2$  to complexes whose graph is contained in the star of a vertex.     The citations for these two lemmas are not strictly correct as there are no statements that say the same thing in Kalai's paper.  However, the proofs are so close to those in Section 9 of \cite{Kalai} we felt that we would be remiss not to point this out.

\begin{lemma} \label{lemma:no empty circles} \cite[Section 9]{Kalai}
Let $\Delta$ be a complex with relatively minimal $g_2$ with respect to $\sigma. $   If $\Omega$ is an induced subcomplex of $\Delta$ which is a circle, then $\Omega \subseteq \lk( \sigma).$
\end{lemma}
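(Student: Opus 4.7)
The plan is to argue by contradiction using the stress-space description of $g_2$. First I will establish the key consequence of relatively minimal $g_2$: every generic equilibrium stress of $\Delta$ must be supported in $\Star \sigma$. Iterating the Cone Lemma~\ref{cone lemma} starting from the $(d - |\sigma| + 1)$-rigidity of the normal pseudomanifold $\lk \sigma$ (which has dimension at least two because $\sigma$ has codimension at least three) will show that $\Star \sigma = \sigma \ast \lk \sigma$ is $(d+1)$-rigid, and the identity $g_2(v \ast X) = g_2(X)$ applied iteratively yields $g_2(\Star \sigma) = g_2(\lk \sigma) = g_2(\Delta)$. Proposition~\ref{stresses=g2} will then force the two generic stress spaces to have equal dimension, and the zero-extension inclusion $\Ss((\Star \sigma)_f) \hookrightarrow \Ss(\Delta_f)$ will be an equality, so every generic stress of $\Delta$ vanishes on each edge outside $\Star \sigma$.

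Next I assume, for contradiction, that $\Omega = C(w_1, \dots, w_n, w_1)$ is an induced circle with $\Omega \not\subseteq \lk \sigma$. Because $\Omega$ is induced it has no chords, so any vertex of $\sigma$ appearing among the $w_i$ forces every $w_k$ at $\Omega$-distance at least two from it to lie outside $V(\lk \sigma)$; this splits the failure into two essential cases: (A) every $w_i$ lies in $V(\lk \sigma)$ but some edge $w_i w_{i+1}$ of $\Omega$ fails to lie in $\lk \sigma$, and (B) some vertex $w_j$ of $\Omega$ lies outside $V(\Star \sigma)$. I plan to dispose of case~(A) directly: the subcomplex $X = \Star \sigma \cup \Omega$ has the same vertex set as $\Star \sigma$ and at least one additional edge, so $g_2(X) > g_2(\Star \sigma) = g_2(\Delta)$; since adjoining edges to a $(d+1)$-rigid complex preserves $(d+1)$-rigidity, $X$ will still be $(d+1)$-rigid, and Lemma~\ref{g2 subcomplex < g2 complex} will give the contradiction $g_2(X) \leq g_2(\Delta)$.

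The hard part will be case~(B): here $X$ acquires new vertices with only two new incident edges each, so $X$ is no longer automatically $(d+1)$-rigid and a direct $g_2$-count does not close the argument. Following Section~9 of \cite{Kalai}, the intended remedy is to enlarge $\Star \sigma$ by chaining on stars $\Star(v_0, \Delta), \dots, \Star(v_m, \Delta)$, with $v_0 \in \sigma$ and consecutive $v_i$ sharing a facet of $\Delta$; the shared $d$-simplex supplies the $K_{d+1}$ required by the Union Lemma~\ref{union lemma}, and each star is individually $(d+1)$-rigid by the Cone Lemma, so the chain-union is $(d+1)$-rigid. The main obstacle is to choose the chain of vertices so that every vertex of $\Omega$ is absorbed into the union while the induced hypothesis still guarantees at least one edge of $\Omega$ lies outside it, producing the strict $g_2$-surplus needed to apply Lemma~\ref{g2 subcomplex < g2 complex} once more and complete the contradiction.
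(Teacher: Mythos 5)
Your reduction to the stress-support statement (every generic stress of $\Delta$ is supported in $\Star \sigma$) and your disposal of case (A) are correct, but case (B) --- a vertex of $\Omega$ lying outside $\Star \sigma$, which is the real content of the lemma --- is left as an acknowledged ``main obstacle'' rather than proved. As you note yourself, chaining stars outward from $\sigma$ does not obviously absorb every vertex of $\Omega$ while keeping an edge of $\Omega$ outside the union, and the proposal stops there. (There is also a small crack in the dichotomy itself: an induced $3$-circle with one vertex in $\sigma$ and the other two in $\lk \sigma$ falls into neither (A) nor (B) as you state them.)

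The missing idea is to build the rigid subcomplex out of $\Omega$ itself and to forget about $\Star \sigma$ entirely. First, if $\Omega \not\subseteq \lk \sigma$ there is an edge $e=w_1w_2$ of $\Omega$ with $e \notin \Star \sigma$: if every edge of $\Omega$ lay in $\Star \sigma$, inducedness would force no vertex of $\Omega$ to lie in $\sigma$, and then every edge of $\Omega$ would lie in $\lk \sigma$. Now let $R$ be the union of the closed stars in $\Delta$ of the remaining vertices $w_3,\dots,w_n$ of $\Omega$. These vertices form a path, consecutive stars contain the star of the connecting edge and hence a common facet (a $K_{d+1}$), so Lemma \ref{union lemma} makes $R$ generically $(d+1)$-rigid; moreover $w_1 \in \Star w_n$ and $w_2 \in \Star w_3$, so both endpoints of $e$ are vertices of $R$. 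Inducedness of $\Omega$ is exactly what guarantees $e \notin R$, so $R \cup \{e\}$ is $(d+1)$-rigid with $g_2(R\cup\{e\}) = g_2(R)+1$, which yields a generic stress of $\Delta$ whose support contains $e \notin \Star \sigma$ --- contradicting the stress-support statement you already established. This single construction covers your cases (A) and (B), and the $n=3$ case, uniformly; no connection of the union to $\sigma$ is ever needed.
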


\begin{proof}
    Suppose $\Omega$ is not contained in the link of $\sigma.$  Then there is an edge $e$ of $\Omega$ such that $e$ is not in $\Star \sigma.$  Let $R$ be the union of the closed stars of the vertices of $\Omega$ not incident to $e.$  By lemma \ref{union lemma} $R$ is $d$-rigid, $d = \dim \Delta +1,$ and as previously noted, this implies $R \cup \{e\}$ is $d$-rigid. Since $\Omega$ is an induced subcomplex, $e \notin R$ and $g_2(R \cup \{e\})=g_2(R)+1.$  Hence in $R$ and $\Delta$ there is a generic stress whose support contains $e$, a contradiction.  
\end{proof}

\begin{lemma}\label{lemma:g2=sing} \cite[Section 9]{Kalai}
Suppose $\Delta$ has relatively minimal $g_2$ with respect to $\sigma.$   
If a vertex $v$ is not in $\Star (\sigma),$ then $\lk(v)$ is a stacked sphere. 

\end{lemma}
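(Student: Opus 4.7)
The plan is to use the stress-space interpretation of $g_2$: since $\Delta$ has relatively minimal $g_2$ with respect to $\sigma$, a combination of the cone lemma and Proposition \ref{stresses=g2} forces every generic stress on $\Delta$ to be supported inside $\Star\sigma$ (this is the observation recorded immediately before Lemma \ref{lemma:no empty circles}). I would combine this with the cone lemma applied to $\Star v$ to show $g_2(\lk v) = 0$, and then invoke the Kalai--Fogelsanger lower bound theorem to conclude that $\lk v$ is a stacked sphere.

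First I would record the set-theoretic fact that if $v \notin \Star\sigma$, then no edge incident to $v$ lies in $\Star\sigma$: such an edge $vw$ would require $\sigma \cup \{v,w\} \in \Delta$, which forces $v \in \Star\sigma$. This establishes the compatibility between the position of $v$ and the allowed location of stress support.

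For the main step, I would argue by contradiction: assume $g_2(\lk v) > 0$. The link $\lk v$ is a normal $(d-1)$-pseudomanifold with $d - 1 \ge 2$, so it is $d$-rigid by Fogelsanger's theorem, and therefore $\Star v = C(\lk v)$ is $(d+1)$-rigid by the cone lemma (Lemma \ref{cone lemma}). The cone lemma additionally provides a generic stress on $\Star v$ whose support contains an edge incident to $v$. Extending this stress by zero on every edge of $\Delta$ outside $\Star v$ produces a generic stress on $\Delta$: the equilibrium condition at vertices outside $\Star v$ is automatic, and at vertices of $\Star v$ it is unchanged since the added edges carry weight zero. By the observation above, this stress has an edge of support not contained in $\Star\sigma$, contradicting relative minimality. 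Hence $g_2(\lk v) = 0$, and Kalai's lower bound theorem applied to $\lk v$ concludes that $\lk v$ is a stacked sphere.

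The only delicate point is verifying that the stress built on $\Star v$ really is the restriction of a genuine stress on $\Delta$. This is immediate once one notes that $\Star v$ is a closed star, so an edge of $\Delta \setminus \Star v$ cannot contribute to the equilibrium at a vertex of $\Star v$ whose equation was already balanced, nor at a vertex outside $\Star v$ where all weights are zero. The rest of the argument is routine application of the results compiled in Section \ref{rigidity}.
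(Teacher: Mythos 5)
Your argument is correct, and coincides with the paper's, in the case $\dim \Delta \ge 4$: reducing the claim to $g_2(\lk v)=0$ by extending a stress on $\Star v$ by zero and contradicting the fact that every generic stress of $\Delta$ is supported in $\Star\sigma$ is exactly the first half of the paper's proof. The genuine gap is in your last sentence, and it occurs precisely in the case $\dim\Delta = 3$, which is the one the paper actually needs for its main theorems. When $\dim\Delta=3$ the link of $v$ is a two-dimensional normal pseudomanifold, i.e.\ a triangulated closed surface, and for surfaces $g_2 = 3\,b_1$. So your stress argument shows only that $\lk v$ is a triangulated $2$-sphere (equivalently, that $v$ is nonsingular); it does not show that it is a \emph{stacked} sphere. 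The implication ``$g_2=0$ implies stacked'' is the lower bound theorem for spheres of dimension at least three and is false in dimension two: every triangulated $2$-sphere satisfies $f_1 = 3f_0-6$ and hence has $g_2=0$, yet, for example, the boundary of the octahedron is not stacked (it has no vertex of degree three).

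To close the gap one needs a separate combinatorial argument in dimension three, and this is where the paper uses Lemma \ref{lemma:no empty circles}, which your proposal never invokes. Writing $\sigma = u$ (a vertex, since $\dim\Delta = 3$), the paper first notes that a stacked $2$-sphere is characterized by having no induced circle of length four or more. If $\Omega$ were such an induced circle in $\lk v$, then Lemma \ref{lemma:no empty circles} (applied to missing triangles through $v$) shows $\Omega$ is in fact induced in $\Delta$, hence must lie in $\lk u$; choosing two nonadjacent vertices $w,x$ of $\Omega$ then makes $\{v,w,u,x\}$ an induced square of $\Delta$ not contained in $\lk u$, contradicting Lemma \ref{lemma:no empty circles} again. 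Without an argument of this kind your proof establishes only that $v$ is nonsingular in the three-dimensional case, which is strictly weaker than the statement of the lemma.
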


\begin{proof}
  
 We consider the cases of $\dim \Delta \ge 4$ and $\dim \Delta = 3$ separately.  First suppose $\dim \Delta \ge 4.$   Let $v$ be a vertex which is not in $\Star (\sigma).$  If $g_2(\lk v) \neq 0,$ then by the cone lemma there is a generic stress in $\Star v$ whose support includes an edge incident to $v.$   The support of such a stress is not contained in $\Star \sigma$.  Hence $g_2(\lk v)=0$ and the link of $v$ is a stacked sphere \cite{Kalai}.  
  
  Now we assume the dimension of $\Delta$ is three.  So $\sigma$ is a vertex which we now call $u.$  Since the links of singular vertices of $\Delta$ have positive $g_2,$  the same reasoning as above shows that $v$ is nonsingular.  To show that the link of $v$ is a stacked sphere it is sufficient to prove that the link of $v$ has no induced subcomplex $\Omega$  which is a circle of cardinality four or more.   So assume that the $\Omega$ is an induced subcomplex of the link of $v$  which is a circle of size four or more.  By Lemma \ref{lemma:no empty circles} $\Delta$ has no empty triangles which contain $v.$  Hence $\Omega$ is also an induced subcomplex of $\Delta$ and is contained in the link of $u.$  However, if we let $w,x$ be two vertices of $\Omega$  which do not share an edge, the induced subcomplex on $\{v,w,u, x\}$ is a square contradicting Lemma \ref{lemma:no empty circles}. 
\end{proof}

\begin{theorem} \label{no outside vertices}
  Suppose $\Delta$ is a $3$-pseudomanifold with relatively minimal $g_2$ with respect to $u.$    Then there exists $\Delta'$ and a vertex $u' \in \Delta'$ such that $\Delta$ is obtained from $\Delta'$ by (possibly zero) facet subdivisions and $G(\Delta')= G(\Star u').$
  \end{theorem}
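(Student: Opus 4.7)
The plan is to induct on $n = |V(\Delta) \setminus V(\Star u)|$, the number of vertices outside $\Star u$. In the base case $n = 0$, every vertex lies in $\Star u$, and I claim $G(\Delta) = G(\Star u)$. The only obstruction would be an edge $xy$ with $x,y \in \lk u$ but $uxy \notin \Delta$, whence $\{u,x,y\}$ would induce a $3$-circle that cannot lie in $\lk u$ (it contains $u$), contradicting Lemma \ref{lemma:no empty circles}. So $\Delta' = \Delta$ and $u' = u$ work with zero facet subdivisions.

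For the inductive step with $n \ge 1$, the target is to locate $v \in V(\Delta) \setminus V(\Star u)$ with $\lk v = \partial(abcd)$ for some missing tetrahedron $abcd$ of $\Delta$. Un-subdividing at such a $v$ (removing $v$ and its cofaces, then inserting $abcd$) yields a $3$-pseudomanifold $\Delta''$ with $n$ reduced by one which still has relatively minimal $g_2$ with respect to $u$ by Proposition \ref{relatively minimal preserving operations}(\ref{facet subdivision}); the induction hypothesis applied to $\Delta''$ then finishes the argument.

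To find such a $v$, pick any $v \in V(\Delta) \setminus V(\Star u)$. By Lemma \ref{lemma:g2=sing}, $\lk v$ is a stacked $2$-sphere. If $|V(\lk v)| = 4$, then $v$ is directly un-subdividable. Otherwise $\lk v$ admits an ear $w$ with $\lk(w, \lk v) = C(a,b,c)$, so $abc$ is a missing triangle of $\lk v$; note that $u \notin \{a,b,c,v\}$, since $v \notin \Star u$ and each of $a,b,c$ is adjacent to $v$. If $abc \in \Delta$, then $vabc$ is a missing tetrahedron and I invoke the missing-tetrahedron classification of Lemmas \ref{connected sum and handle addition}, \ref{lemma:missingtetra2}, and \ref{lemma:missingtetra3}. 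Proposition \ref{no handle addition} rules out handle addition. Vertex or edge folding at any vertex of $vabc$ would, by the formulas \eqref{folding g2} and \eqref{edge folding g2}, drop $g_2$ strictly on any unfolded complex $\Delta^\dagger$, while $\lk(u,\Delta^\dagger) = \lk(u,\Delta)$ (as $u$ is not involved in the folding, using folding-admissibility as in the derivation of \eqref{folding linky}), so the relative lower bound applied to $\Delta^\dagger$ would force $g_2(\Delta^\dagger) \ge g_2(\lk u) = g_2(\Delta) > g_2(\Delta^\dagger)$, a contradiction. Thus $\Delta = \Delta_1 \#_\psi \Delta_2$ is a connected sum at $vabc$, and Proposition \ref{relatively minimal preserving operations}(\ref{relatively minimal via connected sum}) forces the summand without $u$ to be a stacked sphere, inside which an un-subdividable vertex is available.

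The main obstacle will be the case $abc \notin \Delta$, together with the annular non-separating variant flagged in the discussion after Lemma \ref{lemma:missingtetra3} in which the would-be edge unfolding is not a normal pseudomanifold. In the former, Lemma \ref{lemma:no empty circles} applied to the induced $3$-circle $C(a,b,c)$ forces $\{a,b,c\} \subseteq \lk u$, yielding a doubled-cap configuration with $u$ and $v$ both joining $abc$ but no missing tetrahedron available for a direct decomposition. My plan here is a descent through the ear $w$: if $w \notin V(\Star u)$, then $\lk w$ is also a stacked $2$-sphere with $v$ as one of its ears, and by selecting the initial $v$ to minimize $|V(\lk v)|$ among vertices outside $\Star u$ and iterating, one eventually reaches a strictly smaller link (a contradiction) or an un-subdividable vertex elsewhere; if $w \in \Star u$, a careful analysis of the induced subcomplex on $\{u,v,w,a,b,c\}$ via repeated applications of Lemma \ref{lemma:no empty circles} to candidate $3$- and $4$-circles should yield either a direct missing tetrahedron elsewhere or a contradiction to relative minimality. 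Covering these residual cases, along with the non-normal annular edge-unfolding (which the authors note still has enough pseudomanifold-like structure for the argument), is the technical heart of the proof.
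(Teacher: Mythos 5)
Your overall architecture (induction on the number of vertices outside $\Star u$, stacked-sphere links via Lemma \ref{lemma:g2=sing}, a missing tetrahedron $vabc$, recognition lemmas, and ruling out handle addition and foldings through the $g_2$ formulas and the relative lower bound at $u$) is the paper's, and those pieces are sound. But the two places you defer are genuine gaps, and one of them is the technical heart of the proof, which you have not located. First, the case $abc\notin\Delta$ does not need a descent: since $u\notin\Star v$, one can retriangulate the $3$-ball $\Star v$ by deleting $v$, inserting the triangle $abc$, and coning off the two spheres formed by $\lk v$ and $abc$; this produces a normal pseudomanifold with $g_2$ one less than $g_2(\Delta)$ but with $\lk u$ unchanged, contradicting $g_2\ge g_2(\lk u)=g_2(\Delta)$. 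So $abc\in\Delta$ always, and your ``doubled-cap'' analysis is unnecessary.

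Second, your step ``thus $\Delta=\Delta_1\#_\psi\Delta_2$'' after eliminating handle addition and foldings tacitly assumes that Lemmas \ref{connected sum and handle addition}, \ref{lemma:missingtetra2} and \ref{lemma:missingtetra3} classify every missing tetrahedron. They do not: each has separation hypotheses on the induced missing triangles in the vertex links. Only $\partial(abc)\subseteq\lk v$ is guaranteed to separate (because $\lk v$ is a sphere); if all three of $a,b,c$ are singular it can happen that none of $\partial(vbc),\partial(vac),\partial(vab)$ separates its link, and then no recognition lemma applies and no unfolding is available. The paper's proof spends most of its length showing this configuration cannot persist for every missing tetrahedron $vxyz$ with $v\notin\Star u$: it shows $G(\lk v\cap\lk u)$ is complete, builds a chain of vertices $v=v_0,v_1,\dots$ outside $\Star u$ all containing $\partial xyz$ as a missing triangle in their links, shows the chain must terminate in $\lk u$ on both sides, and then exhibits a $4$-rigid subcomplex $\Omega\supseteq\Star u$ with $g_2(\Omega)=g_2(\Star u)+1=g_2(\Delta)+1$, contradicting Lemma \ref{g2 subcomplex < g2 complex}. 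Your proposal contains no substitute for this. Finally, the annulus subcase you flag is resolved in the paper by noting that the non-normal edge unfolding has vertex links that are surfaces with two vertices identified, hence minimal cycles, so Fogelsanger's rigidity and Kalai's argument still apply; merely citing the authors' remark that the unfolding ``has enough structure'' is not an argument you can make blind.
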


\begin{proof}
The proof is by induction on the number of vertices of $\Delta$ not contained in the star of $u.$  For the base case of zero, let $e$ be an edge of $\Delta$ not contained in $\Star u.$  Then $\Star u \cup \{e\}$ is rigid and $g_2(\Star u \cup \{e\}) = g_2(\Star u)+1.$  Hence there are stresses which contain $e$ in their support and this is impossible.   

 For the induction step, let $v$ be a vertex of $\Delta$ not contained in the star of $u.$ By the previous lemma the link of $v$ is a stacked sphere.    There are now two possibilities.  One, the link of $v$ is the boundary of a $3$-simplex, say $wxyz.$  If $wxyz$ is a face of $\Delta,$ then the boundary of the four-simplex $vwxyz$ is in $\Delta$ and hence $\Delta$ is the boundary of the $4$-simplex.   If $wxyz$ is not a face of $\Delta,$ then  we can remove $v$ and insert the tetrahedron $wxyz$ for the induction step.   So now we assume the link of $v$ is any other stacked sphere and use an approach originally introduced by Walkup \cite[Lemma 10.4]{Walkup}.  The link of $v$ has at least one missing triangle, say  $xyz.$  We first observe that $xyz$ is a face of $\Delta.$  Otherwise, we could retriangulate the $3$-ball $\Star v$ by removing $v$, inserting $xyz$ and then coning off the two spheres formed by the link of $v$ and $xyz.$  The resulting complex would have $g_2$ one less that $\Delta,$ but would still have a vertex whose link is the same as the link of $u.$    Since this is impossible, $xyz$ is in $\Delta$ and $vxyz$ is a missing tetrahedron.  
 
 The next step is to show that if at most two of $\{x,y,z\}$ are singular, then $\Delta = \Delta_1 \# \Delta_2$ where $v \in \Delta_1.$  When $\Delta$ can be written in this  form $g_2(\Delta_1) = g_2(\Delta)$ and   (\ref{g_2:connected sum}) shows that $\Delta_2$ is a stacked sphere.  Induction on the number of vertices in a stacked sphere show that $\Delta = \Delta_1 \# \Delta_2,$ with $\Delta_2$ a stacked sphere, if and only if $\Delta$ can be obtained from $\Delta_1$ by repeatedly subdividing facets    and  the induction step is complete.  The last step is to prove that it is impossible for $x,y$ and $z$ to be singular for every missing tetrahedron $vxyz$ with $v \notin \Star u.$ 

 Suppose at most one of $x,y$ and $z$ is singular.    Then Lemmas \ref{connected sum and handle addition}, \ref{lemma:missingtetra2} and Proposition \ref{no handle addition}   imply that $\Delta$ was formed via connected sum as promised or a vertex folding at $x,y$ or $z.$    If $\Delta'$ is a vertex unfolding of $\Delta,$ then $\Delta'$ contains a vertex whose link is isomorphic to the link of $u.$ So  (\ref{folding g2})  eliminates  vertex folding as a possibility.  
 
 So now assume exactly two of $\{x,y,z\}$ are singular, say $x$ and $y.$  If the missing triangle $\partial vyz$ in the link of $x$ or the missing triangle $\partial vxz$ in the link of $y$ separate their respective links, the argument of the previous paragraph applies.  So suppose neither missing triangle separates the links of $x$ and $y.$    By Lemma \ref{lemma:missingtetra1} small neighborhoods of $\partial vyz \subseteq \lk x$ and $\partial vxz \subseteq \lk y$ are either both annuli or M\"obius strips.  If both are M\"obius strips, then  Lemma \ref{lemma:missingtetra3} gives us a $\Delta'$ an edge unfolding of $\Delta$  which contains a vertex whose link is isomorphic to the link  of $u.$  So (\ref{edge folding g2}) eliminates this possibility.  Lastly, we consider the possibility that both neighborhoods are annuli.  From the discussion following Lemma \ref{lemma:missingtetra3}  there is an edge unfolding $\Delta'$ which is not a normal pseudomanifold as two of its vertex links are surfaces with a pair of vertices identified.  However, surfaces with two vertices identified are minimal cycles and Fogelsanger's thesis \cite{Fogelsanger} implies they are $3$-rigid.  Hence Kalai's original proof of the rigidity of normal pseudomanifolds, \cite{Kalai} applies verbatim to $\Delta'.$  This allows us to repeat the argument applied to a pair of M\"obius strips and show that two annuli which do not separate their links is also impossible.   For an alternative approach to analyzing when both neighborhoods are annuli see Remark \ref{two annuli} below.
 
For the last step we assume that for all $v \notin \Star u$ and missing tetrahedron $vxyz,$ the vertices $x,y,$ and $z$ are singular.  In particular, they are in the link of $u.$    We begin by observing that $G(\lk v \cap \lk u)$ is a complete graph.  To see this, let $w_1$ and $w_2$ be distinct vertices  of $G(\lk v \cap \lk u).$  The edge $w_1 w_2$ is in $\Delta.$  Otherwise, $\{v,w_1, u, w_2\}$ is an induced circle which is not contained in the star of $u.$  On the other hand, $w_1 w_2$ must be in  $\Star u$ since if not,~$g_2(\Star u \cup \{w_1 w_2\}) = g_2(\Star u)+1 = g_2(\Delta) +1.$ Since the link of $v$ is a sphere, the two possibilities for $G(\lk v \cap \lk u)$ are the empty triangle $\partial xyz$ or a $K_4$ with vertices $\{x,y,z,w\},$ where $w \in \lk u.$  

Now consider the link of $v.$  The empty triangle (in $\lk  v$ - it is not empty in $\Delta$) $\partial xyz$ separates the link of $v$ into two components.  Only one of these two components may contain another vertex of $\lk u.$  If a component of $\lk v \setminus \partial xyz$ which does not contain a vertex of the link of $u$ has two (or more)  vertices, then $\lk v$ has an empty triangle with at most two singularities - a situation dealt with above.  Hence there exists $t$ in the complement of the link of $u$ such that $xyt, xzt$ and $yzt$ are triangles in the link of $v.$  The same arguments applied to $v$ also apply to $t.$ Designating $v_0=v$ and $v_1=t$ we obtain a minimal sequence of vertices $(v_0, v_1, \dots, v_m)$ such that 
 \begin{itemize}
   \item For  all $0 \le i \le m,~ \partial xyz$ is a missing triangle in the link of $v_i$
   \item
     $v_i \notin \Star u, 0 \le i \le m-1.$
    \item 
     $v_m \in \lk u$ or $v_m = v_i$ for $i<m.$
     \item
      For $0 \le i \le m-1$ the link of $v_i$ contains the triangles $xyv_{i+1}, xzv_{i+1}$ and $yzv_{i+1}.$
 \end{itemize}
 
Suppose $v_i = v_m$ for some $i<m.$  Then $ \bigcup^m_{j=i} \Star v_j$  is the join of $\partial xyz$ and the circle $C(v_i, v_{i+1}, \dots, v_{m-1}, v_m).$  Since this is a triangulation of $S^3$ we reach a contradiction.  So $v_m$ is in the link of $u.$

On the  side of $\partial xyz$ in the link of $v$ which does not contain $t$ we have a similar sequence of vertices.  Putting these two sequences together we have a relabeled sequence of  vertices $(s_0, s_1, \dots, s_{j-1}, s_j)$ such that 

\begin{enumerate}
  \item  Except for possibly $s_0=s_j,$ the vertices are distinct. 
  \item $s_0$ and $s_j$ are in the link of $u.$  All other $s_i$ are outside of the star of $u.$  
  \item For $0 \le i \le j-1,~ s_i$ is in the link of $s_{i+1}.$
  \item For $2 \le i \le j-2,$ the link of $s_i$ is $\partial xyz \star \partial s_{i-1} s_{i+1}.$  The link of $s_1$ contains the graph $G(\partial xyz \star \partial s_0 s_2).$  Similarly, the link of $s_{j-1}$ contains $G(\partial xyz \star \partial s_{j-2} s_j).$  
\end{enumerate}
 
 Let $G_i = G(\partial xyz \star \partial s_{i-1} s_{i+1}).$ For $1 \le i \le j-1, G_i$ is a subcomplex of $\Delta.$  Now consider $\Omega= (\Star u) \cup  \bigcup^{j-1}_{i=1} G_i.$ Repeated applications of the union lemma show that $\Omega$ is $4$-rigid and is a subcomplex of $\Delta.$   However, $g_2(\Omega) = g_2(\Star u) +1= g_2(\Delta)+1.$  This violates Lemma \ref{g2 subcomplex < g2 complex} and provides the contradiction required to complete the proof.
\end{proof}

\begin{remark} \label{two annuli}
  When analyzing the case of the two neighborhoods of missing triangles being annuli which do not separate their links, there is an alternative approach which ignores the links of  $x$ and $y$ in $\Delta'$ the edge unfolding of $\Delta.$  One can use the $4$-connectivity of $\Delta,$ \cite{Barnette3}, to show that the graph of the edge unfolding of $\Delta$ is still connected after removing $x$ and $y.$ This allows one to apply \cite[Proposition 6.4]{Kalai} to establish the $4$-rigidity of $\Delta'.$
   \end{remark}

\section{Few singularities}  \label{few singularities}

What topological restrictions does relatively minimal $g_2$ put on a complex $\Delta$ or $\Ss(\Delta)?$  It already follows from Lemma \ref{lemma:no empty circles} that $\Delta$ must be simply connected.  In this section we describe the homeomorphism type of all three-dimensional complexes with relatively minimal $g_2$ and at most two singularities.

\begin{theorem} \label{two singularities - combinatorics}
Let  $\Delta$ be a normal $3$-pseudomanifold with relatively minimal $g_2.$ If $\Delta$ has at most two singularities, then  $\Delta$ can be obtained from the boundary of a $4$-simplex by a sequence of operations of types $(1),\dots,(5)$ of Proposition \ref{relatively minimal preserving operations}  and any application of (\ref{relatively minimal via connected sum}) involves at most one singular vertex. 
\end{theorem}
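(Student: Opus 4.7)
I proceed by strong induction, ordering normal $3$-pseudomanifolds with relatively minimal $g_2$ and at most two singularities lexicographically by $(g_2(\Delta),f_0(\Delta))$. For the base case $g_2(\Delta)=0$, Kalai's classification gives that $\Delta$ is a stacked $3$-sphere, obtainable from $\partial\Delta^4$ (op~$(1)$) by iterated facet subdivisions (op~$(2)$).

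For the inductive step with $g_2(\Delta)>0$, I first invoke Theorem~\ref{no outside vertices} to obtain $\Delta_0$ with $G(\Delta_0)=G(\Star(u,\Delta_0))$ such that $\Delta$ arises from $\Delta_0$ by a (possibly empty) sequence of facet subdivisions. Part~$(2)$ of Proposition~\ref{relatively minimal preserving operations} says $\Delta_0$ has relatively minimal $g_2$ with respect to $u$, the same $g_2$, and the same $\Ss$; thus if the reduction is non-trivial, induction on $f_0$ applies to $\Delta_0$ and op~$(2)$ recovers $\Delta$. So I may assume $G(\Delta)=G(\Star u)$. Since $g_2(\Delta)=g_2(\lk u)=3\,b_1(\lk u)>0$, the link $\lk u$ is a nonspherical connected surface; it therefore possesses a missing triangle, and using the pseudomanifold property together with the condition $G(\Delta)=G(\Star u)$ I extract a missing tetrahedron $\tau=abcd$ in $\Delta$ (concretely, any facet $wxyz$ outside $\Star u$ with a triangle not in $\lk u$ produces one of the form $uwxy$).

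Now I invoke the trichotomy of Lemmas~\ref{connected sum and handle addition}, \ref{lemma:missingtetra2}, and~\ref{lemma:missingtetra3}. \emph{Case (i):} if every triangle $\partial\tau\setminus\{x\}$ separates $\lk x$, Lemma~\ref{connected sum and handle addition} together with Proposition~\ref{no handle addition} (ruling out handle addition) yields $\Delta=\Delta_1\#_\psi\Delta_2$. Part~$(3)$ of Proposition~\ref{relatively minimal preserving operations} then splits the argument into two sub-cases (either $\Delta_2$ is a stacked sphere, hence absorbable as op~$(2)$, or both summands have relatively minimal $g_2$), and in each sub-case the induction applies to the summands with strictly smaller $(g_2,f_0)$. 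The hypothesis $|\Ss(\Delta)|\le 2$ is used to ensure the identified vertex is singular in at most one summand, establishing the required restriction on op~$(3)$. \emph{Case (ii):} if exactly one opposite triangle fails to separate, Lemma~\ref{lemma:missingtetra2} yields $\Delta=(\Delta')^\psi_x$ from a vertex folding; part~$(4)$ of Proposition~\ref{relatively minimal preserving operations} and equation~(\ref{folding g2}) give that $\Delta'$ has relatively minimal $g_2$ with $g_2(\Delta')=g_2(\Delta)-6$, and equations~(\ref{folding linkx}) and~(\ref{folding linky}) show $|\Ss(\Delta')|\le 2$, so induction on $g_2$ applies and op~$(4)$ reconstructs $\Delta$. \emph{Case (iii):} if two or more opposite triangles fail to separate, Lemma~\ref{lemma:missingtetra3} (together with the annulus analogue discussed in the paragraph after it) formally suggests an edge folding (op~$(6)$), which is not among the allowed operations. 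Here the plan is to exploit the hypothesis $|\Ss(\Delta)|\le 2$: by equation~(\ref{edge folding u link}) edge folding inflates $b_1$ in both $\lk u$ and the partner link simultaneously, so the edge-folding scenario, when compatible with the two-singularity budget, forces $\Delta$ to coincide with a one-vertex suspension $\Sigma_w\Gamma$ at a graph cone point (op~$(5)$), with $g_2(\Gamma)=g_2(\Delta)$ and $f_0(\Gamma)<f_0(\Delta)$, so induction applies to $\Gamma$.

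The main obstacle is \emph{Case (iii)}: carefully establishing that, under $|\Ss(\Delta)|\le 2$, the edge-folding structure can always be rewritten as a one-vertex suspension, thereby replacing the forbidden op~$(6)$ by op~$(5)$. A secondary subtlety arises in \emph{Case (i)}, where one must carefully track singularities across the connected-sum decomposition to preserve the restriction ``at most one singular vertex per application of op~$(3)$'' through the induction.
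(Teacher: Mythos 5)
Your overall architecture---reduce to $G(\Delta)=G(\Star u)$ via Theorem \ref{no outside vertices}, extract a missing tetrahedron through $u$, and induct through connected sums, vertex foldings and one-vertex suspensions---matches the paper's. But your Case (iii) is a genuine gap, and you have correctly flagged it as the main obstacle: you assert that when two opposite triangles of the missing tetrahedron fail to separate their links, the two-singularity hypothesis ``forces $\Delta$ to coincide with a one-vertex suspension,'' but no argument is given, and the singularity budget alone cannot supply one. An edge folding at the edge joining the two singular vertices produces a complex with exactly two singularities and, by item (6) of Proposition \ref{relatively minimal preserving operations}, preserves relative minimality; so nothing about $|\Ss(\Delta)|\le 2$ excludes this configuration, and such a complex is not visibly a one-vertex suspension.

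The paper sidesteps Case (iii) entirely by choosing the missing tetrahedron more carefully. In the two-singularity case it takes $xyz$ to be a triangle of $\lk v_2$ that does not contain $v_1$ and is not a triangle of $\lk v_1$: if no such triangle exists, $\Delta$ is exactly the one-vertex suspension of $\lk v_1$ (operation (5)); otherwise $v_1xyz$ is a missing tetrahedron whose vertices $x,y,z$ are all nonsingular (they lie in $\lk v_2$ and differ from $v_1$ and $v_2$). Since a missing triangle in a triangulated $2$-sphere always separates, the only opposite triangle that can fail to separate is $\partial xyz\subseteq\lk v_1$, so the trichotomy collapses to ``connected sum versus vertex folding at $v_1$'' and Lemma \ref{lemma:missingtetra3} is never invoked. (The one-singularity case is handled the same way, with $xyz$ an interior triangle of the induced subcomplex on $V\setminus\{v\}$.) To repair your proof you would need to show that any $\Delta$ falling into your Case (iii) also admits a missing tetrahedron of this restricted form; your extraction ``any facet outside $\Star u$ with a triangle not in $\lk u$'' does not control whether the remaining three vertices are singular, which is precisely what lets Case (iii) intrude. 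A secondary point: in your Case (ii) the folding vertex must be $u$ itself, since a vertex folding at any other vertex contradicts equation (\ref{folding g2}) together with $g_2(\Delta)=g_2(\lk u)$; this should be said explicitly, though it is the same observation already made in the proof of Theorem \ref{no outside vertices}.
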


\begin{proof}
If $\Delta$ has no singularities, then $\Delta$ is  a triangulation of  a closed $3$-manifold.  So the link of every vertex is a sphere and has $g_2=0.$  Hence $g_2(\Delta)=0$ and $\Delta$ is a stacked sphere \cite{Walkup}.   

Suppose $\Delta$ has one singular vertex $v$ and $\Delta$ has relatively minimal $g_2$ with respect to $v.$ We proceed by induction on  $b_1(\lk v)$ and, when necessary, the number of vertices of $\Delta.$    The minimal triangulations of the solid torus and solid Klein bottle with their boundaries coned off are the base cases.     Combining Proposition \ref{no outside vertices} and operation (2) of Proposition \ref{relatively minimal preserving operations} we can assume that $G(\Delta) = G( \Star v).$  Let  $\tilde{\Delta}$ be the induced subcomplex on $V - \{v\}.$ So $\tilde{\Delta}$  is   a triangulation of  a $3$-manifold with boundary the link of $v.$  In addition, our assumption implies that $\tilde{\Delta}$ has no interior edges.  Let $xyz$ be an interior triangle of $\tilde{\Delta}.$  Then $vxyz$ is a missing tetrahedron in $\Delta.$  Depending on whether or not $\partial xyz$ separates the link of $v$, $\Delta$ is either a connected sum or a vertex folding at $v.$   So $\Delta$ is obtained by applying operations (3) or (4) of Proposition \ref{relatively minimal preserving operations} to complexes which satisfy the induction hypotheses.  

Finally, assume that $\Delta$ has two singular vertices $v_1$ and $v_2.$ Let $\Phi_1$ be the link of $v_1$ and $\Phi_2$ the link of $v_2.$  Without loss of generality we can assume that  $b_1(\Phi_1) \ge b_1(\Phi_2)$.  Thus $\Delta$ has relatively minimal $g_2$ with respect to $v_1.$    We proceed by induction on  $b_1=b_1(\Phi_1)-b_1(\Phi_2)$ and, when necessary, the number of vertices in $\Delta.$  The base case is  $b_1=0.$ As was the case for one singularity, we can assume that $G(\Delta) = G(\Star v_1).$ Hence $v_1 v_2$ is an edge of $\Delta.$  If every triangle $xyz$ in the link $v_2$ which does not contain $v_1$ is also a triangle in the link of $v_1,$ then $\Delta$ is the one-vertex suspension of the link of $v_1,$ a single application of operation (5). So assume that $xyz$ is a triangle in $\lk v_2$ but is not a triangle in $\lk v_1.$  This implies that $v_1xyz$ is a missing tetrahedron in $\Delta.$  As before this implies that $\Delta$ either comes from a vertex folding at $v_1$ or a connected sum where $v_1xyz$ is one of the identified facets which has been removed. The usual formulas for $g_2$ and the links of foldings and connected sum show that a folding is impossible. Thus $\Delta$ is  a connected sum, $\Delta = \Delta_1 \#_\psi \Delta_2,$ where one of the $\Delta_i$ is a stacked sphere.  So the induction hypothesis on the number of vertices applies to both $\Delta_i.$ 

   For the induction step we can assume that  $b_1 >0.$ Now, the exact same reasoning as when  $b_1=0$ shows that $\Delta$ was obtained via connected sum or vertex folding and in both cases involves complexes with smaller  $b_1$, number of vertices, or number of singularities.  
\end{proof}

\begin{theorem} \label{two singularities - homeomorphism}

Let $\Delta$ be a normal $3$-pseudomanifold with at most two singularities.  Then $\Delta$ is a  triangulation of a pseudocompression body.

\end{theorem}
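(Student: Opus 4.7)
The plan is to invoke Theorem \ref{two singularities - combinatorics} to reduce the problem to a combinatorial induction: $\Delta$ is built from $\partial \Delta^4$ by the operations of Proposition \ref{relatively minimal preserving operations} (with the constraint that any connected sum involves at most one singular vertex), and at each stage I would verify that the homeomorphism type remains that of a pseudocompression body. In particular, the case $\Ss(\Delta) = \emptyset$ reduces immediately to $\Delta$ being a stacked sphere, and hence $|\Delta| \cong S^3$.

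For the base case, $|\partial \Delta^4| \cong S^3$, which is a pseudocompression body: take $C = S^2 \times [0,1]$ (bottom $K = S^2$, no $1$-handles attached) and cone each boundary sphere to recover $S^3$. Operation (2), facet subdivision, never changes $|\Delta|$, so it trivially preserves being a pseudocompression body. Operation (5), one-vertex suspension at a graph cone point, is invoked in the proof of Theorem \ref{two singularities - combinatorics} only when $\Delta$ is a triangulation of a closed connected compact surface $K$; then $|\Sigma_v \Delta| \cong \Sigma K$, which is exactly the pseudocompression body built from $C = K \times [0,1]$ with no $1$-handles.

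For operation (\ref{relatively minimal via connected sum}), connected sum with at most one singular vertex identified, the $g_2$-additivity formula (\ref{g_2:connected sum}) combined with relative minimality of $\Delta$ and the link-bound in Lemma \ref{g2>face} forces the non-distinguished summand to have $g_2 = 0$, hence to be a stacked sphere. Connected sum with a stacked sphere is topologically trivial, so $|\Delta|$ is unchanged and the pseudocompression body property is preserved.

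The main case, and the principal obstacle, is operation (4): vertex folding at the distinguished cone vertex $v$ with respect to which $\Delta$ has relatively minimal $g_2$. Topologically, this removes an open neighborhood of $v$, identifies two adjacent triangles on the surface $\lk v$ via the admissible bijection $\psi$, and re-cones the result. If $|\Delta|$ is the pseudocompression body coming from a compression body $C$ in which $v$ is the cone vertex over a \emph{top} boundary component $F \subseteq \partial C$, then the identification of two triangles in $F$ is exactly the attachment of a new $1$-handle to $F$; re-coning yields the pseudocompression body built from $C$ together with this added $1$-handle. The delicate point is verifying that the distinguished cone vertex always sits at a top boundary component of the inductively produced $C$ (rather than at a bottom component or in the manifold interior). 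This should follow by carefully tracking how the vertex with respect to which $\Delta$ has relatively minimal $g_2$ is carried through the inductive construction in the proof of Theorem \ref{two singularities - combinatorics}: in each of the base cases (suspension of a surface, solid torus/Klein bottle coned off) $v$ is manifestly a top cone vertex, and each subsequent operation of type (4) or (\ref{relatively minimal via connected sum}) is performed at that vertex, so the distinguished role propagates through the induction.
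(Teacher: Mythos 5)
Your overall strategy---reduce to the build-up sequence of Theorem \ref{two singularities - combinatorics} and check that each operation preserves the pseudocompression body structure, with vertex folding at the distinguished vertex realized as attaching a $1$-handle to the top---is the paper's strategy, and your treatment of facet subdivision, one-vertex suspension, and vertex folding matches the paper's proof in substance.

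However, there is a genuine gap in your connected sum case. You claim that $g_2$-additivity plus relative minimality forces the ``non-distinguished'' summand to have $g_2=0$, so that the connected sum is topologically trivial. That deduction is valid only when the distinguished vertex $v$ is \emph{not} one of the identified vertices. The constraint in Theorem \ref{two singularities - combinatorics} (``at most one singular vertex involved'') still permits the connected sum to be performed \emph{at} the singular vertex $v$ itself, i.e.\ $\Delta = \Delta_1 \#_\psi \Delta_2$ with $v$ the image of identified vertices $v_1$ and $\psi(v_1)$. Then $\lk(v,\Delta) = \lk(v_1,\Delta_1) \#_\psi \lk(\psi(v_1),\Delta_2)$, so $g_2(\lk v) = g_2(\lk v_1) + g_2(\lk \psi(v_1))$, and equality in $g_2(\Delta_1)+g_2(\Delta_2)=g_2(\Delta)=g_2(\lk v)$ together with Lemma \ref{g2>face} only forces \emph{each} $\Delta_i$ to have relatively minimal $g_2$ with respect to its copy of $v$; neither need be a stacked sphere. (A genus-two handlebody with its boundary coned off arises in exactly this way from two coned-off solid tori.) In that situation the operation is not topologically trivial: it is a boundary connected sum of two handlebodies (or of a compression body and a handlebody) along the top, and one must argue---as the paper does, via the induction hypothesis that $\Delta_i$ minus its cone vertex is a handlebody---that the result is again a pseudocompression body. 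Relatedly, your ``delicate point'' that $v$ always sits over a top boundary component is the right thing to isolate but is left as an assertion; the paper secures it by observing that the link of the second singular vertex $w$ is never altered after the initial one-vertex suspension, so $w$ remains the bottom cone point and every subsequent (3) or (4) acts on the top.
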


\begin{proof}
If $\Delta$ has no singularities, then $\Delta$ is a stacked sphere and hence  $|\Delta|$ is homeomorphic to $S^3.$  The $3$-sphere is a pseudocompression body since it is $S^2 \times [0,1]$ with both boundary components coned off.

Suppose $\Delta$ has one singular vertex, say $v.$  We claim $\Delta$ is   a triangulation of  a handlebody with its boundary coned off.  This claim would prove that $\Delta$ is   a triangulation of  a pseudocompression body since such a space is $S^2 \times [0,1]$ with $1$-handles attached to $S^2 \times \{1\}$ and both boundary components coned off.  From the proof of the previous theorem $\Delta$ can be obtained using operations (1)-(4) of Proposition \ref{relatively minimal preserving operations}.  Induction on the number times (3) or (4) is used and, if necessary, the number of vertices, is sufficient to prove the claim.  When (3) is invoked the induction hypotheses tell us that $\Delta = \Delta_1 \#_\psi \Delta_2$ where $\Delta_1$ has relatively minimal $g_2$ with respect to $v_1,~ \Delta_2$ has relatively minimal $g_2$ with respect to $v_2$ and both $H_1=\Delta_1 \setminus v_1$ and $H_2=\Delta_2 \setminus v_2$ are    triangulations of  handlebodies.  Thus $\Delta \setminus v$ is   a triangulation of  the handlebody obtained by taking a boundary connected sum of $H_1$ and $H_2.$   When (4) is used the induction hypotheses tell us that  $\Delta = \tilde{\Delta}^\psi_v,$ where $\tilde{\Delta}$ has relatively minimal $g_2$ with respect to $v$ and $\tilde{\Delta} \setminus v$ is   a triangulation of  a handlebody $\tilde{H}.$  In this case $\Delta \setminus v$ is   a triangulation of  the handlebody $\tilde{H}$ with two triangles on its boundary identified and hence is also   a triangulation of  a handlebody.

Finally, assume $\Delta$ has two singularities, $v$ and $w,$ and $\Delta$ has relatively minimal $g_2$ with respect to $v.$   As above, we can assume that $G(\Delta)=G(\Star v).$  We induct on the number of operations (1)-(5) used to construct $\Delta$  as in the previous theorem.  Since all connected sums involve a complex with at most one singular vertex we can always start with the one-vertex suspension of a surface at a graph cone point.  So, after the first operation we have the suspension of a surface and hence a pseudocompression body.  It also follows from the proof of Theorem \ref{two singularities - combinatorics} that the link of $w$ is never altered by any subsequent operation.  

The compression body decomposition of $\Delta \setminus v$ is part of the induction hypotheses.  Stellar subdivide the edge $vw$ and call the new  homeomorphic complex $\Delta'.$ Since the link of $w$ never changes, $\lk (w,\Delta')$ is now $\lk (w,\Delta)$ with the new vertex replacing $v.$   Now remove a small open ball $B(w)$ around $w$ from $|\Delta'|.$  This leaves $|\Star w| \setminus B(w)$ homeomorphic to $|\lk w| \times [0,1]$ after every induction step.  Deleting $v$ (and its incident faces) leaves a compression body whose top is $|\lk (v,\Delta|)$  when $\Delta$ is the one-vertex suspension of a surface.  It remains to check that all of this holds after an application of (3) or (4) to a complex $\Delta$ which satisfies the induction hypotheses.  Let $C$ be the compression body $|\Delta'\setminus v| - B(w).$  Executing (3) is the same as   performing a boundary connected sum with a handlebody on the top of $C$ and reconing off the  boundaries.  A vertex folding involving one singular vertex is the same  identifying two triangles on the top of $C$,  thus adding a single handle to the top of $C$, and reconing off the boundaries.  
\end{proof}

\noindent In terms of understanding the minimum $g_2$ of three-dimensional normal pseudomanifolds with two singularities Theorems \ref{two singularities - combinatorics} and \ref{two singularities - homeomorphism} leave open the following problem.  

\begin{problem}
Let $K_1$ and $K_2$ be connected compact surfaces such that $K_1$ is orientable, $K_2$ is not orientable and  $b_1(K_1) \ge b_1(K_2).$    What is $\Gamma(\{K_1,K_2\})?$
\end{problem} 

\noindent Using connected sums an upper bound for the answer to this problem is  $3 \cdot (b_1(K_1) + b_1(K_2)).$  There are no smaller $g_2$ among all examples with $10$ or fewer vertices \cite{AkhShiSwa}.

We do not know if there are any $3$-dimensional complexes with relatively minimal $g_2$, three or four singularities, but are not pseudomcompression bodies. Datta and Nilakantan determined all three-dimensional normal pseudomanifolds on eight vertices \cite{DattaNilakantan}.  One of them, which they denoted N3, has five singularities, four projective planes and one torus.  It has relatively minimal $g_2$ but does not satisfy Proposition \ref{pseudocompression singularities} and hence is not a pseudocompression body.  An exhaustive computer search shows that all sets of three or four singularities which have a   triangulation with $10$ or fewer vertices and relatively minimal $g_2$ also satisfy the hypotheses of Proposition \ref{pseudocompression singularities} \cite{AkhShiSwa}.  

\begin{problem}
Are there $3$-dimensional complexes with relatively minimal $g_2$,   three or four singularities, but  are not pseudocompression bodies?
\end{problem}

\section{$g_2 = 3$}

  Walkup proved that triangulations of $3$-manifolds with $g_2=0$ are stacked spheres \cite{Walkup}.  This was extended to all $d$-manifolds, $d \ge 3,$  by Kalai \cite{Kalai}.  Fogelsanger's thesis \cite{Fogelsanger}  proved that this result also holds for all normal $d$-pseudomanifolds, $d \ge 3.$  Later, Nevo and Novinsky showed that homology $d$-manifolds with $g_2=1$ are polytopal and gave an explicit description of their combinatorial structure \cite{NevoNovinsky}.  Induction on dimension and Lemma \ref{g2>face} applied to  links extends this result to all normal pseudomanifolds.   In her 2018 paper \cite{Zheng} Zheng repeated this feat for $g_2=2.$  She also noted that this is the last possible result of this nature.  Specifically, for all $i \ge 0$ there exist triangulations of $S^i \ast \RR P^2$ with $g_2=3.$  Zheng asked if  $S^d$ and  $S^{d-3} \ast \RR P^2$ are the only possible topological types for normal $d$-pseudomanifolds with $g_2=3.$  The purpose of this section is to give an affirmative answer to this question in dimension three and provide a simple characterization of such complexes.  
  
Let $\Psi$ be a triangulation of $\RR P^2$ that has a graph cone point $v.$    Now let $\Delta_{\Psi, v}$ be obtained by performing an arbitrary number, possibly zero, of facet subdivisions of $\Sigma_v \Psi.$ Since $v$ is a graph cone point of $\Psi,~g_2(\Sigma_v \Psi)=3.$ Facet subdivisions do not alter $g_2,$ so $g_2(\Delta_{\Psi,v})=3$ and $|\Delta_{\Psi,v}|$ is homeomorphic to  $S^{0} \ast \RR P^2$. The result of this section is the converse - any $3$-dimensional normal pseudomanifold  with $g_2=3$ which is not  a triangulation of the $3$-sphere, is $\Delta_{\Psi,v}$ for some $\Psi$ and $v.$  

 \begin{theorem} \label{g2=3 complexes}
 Let $\Delta$ be a $3$-dimensional normal pseudomanifold which is not a triangulation of $S^3.$  If $g_2(\Delta)=3,$ then $\Delta$ is obtained by taking the one-vertex suspension of a triangulation of $\RR P^2$ at a graph cone point and subdividing facets.
  \end{theorem}  
   
 \begin{proof}  As noted above, if all of the vertices of $\Delta$ are nonsingular,  then $\Delta$ is a triangulation of the $3$-sphere.  By Lemma \ref{g2>face} the links of all of the singular vertices of $\Delta$ are  triangulations of $\RR P^2.$  The sum of the Euler characteristics of any $3$-dimensional normal pseudomanifold is even, so there are an even number of vertices with $\RR P^2$ links.    
  
  The difference $h_3 - h_1$ in a $3$-dimensional normal pseudomanifold is $\displaystyle\sum_{v \in V} (2 - \chi(\lk v)).$   See, for instance, \cite{NovikSwartz}.  By \cite[Corollary 1.8]{Swartz2014} $g_2(\Delta) = 3$ implies that $g_3 \le 4.$  Since $h_3-h_1 = g_2 + g_3$ and all of the singular vertices have Euler characteristic one, the number of singular vertices of $\Delta$ is less than eight.  Theorem 2.12 of \cite{Swartz2014}  implies that if $\Delta$ has six singular vertices, then $g_2(\Delta) \ge 5$ and if it has four singular vertices, then $g_2(\Delta) \ge 4.$  Thus $\Delta$ has two singular vertices and the links of both of these vertices  are triangulations of $\RR P^2.$  We conclude that $\Delta$ has relatively minimal $g_2$ and Theorem \ref{two  singularities - homeomorphism} completes the proof.
  \end{proof}
  
 There are at least ten different homeomorphism types of $3$-dimensional normal pseudomanifolds which can be triangulated with $g_2=6$ \cite{AkhShiSwa}.  However, the following is unknown at present.
 
 \begin{problem}
Is there a $3$-dimensional normal pseudomanifold $\Delta$ such that the minimum of $g_2$ over all triangulations homeomorphic to $\Delta$ is $4$ or $5$?
 \end{problem}  
 
 \bigskip

\noindent {\bf Acknowledgement:} The authors would like to thank the anonymous referees for many useful comments and suggestions. This work has been done when the first author was an SERB Indo-US Postdoctoral Fellow at Cornell University, and currently the first author is supported by DST INSPIRE Faculty Research Grant (DST/INSPIRE/04/2017/002471).

{

\end{document}